\definecolor{Gray}{rgb}{0.91,0.91,0.91}
\newcolumntype{L}[1]{>{\raggedright\let\newline\\\arraybackslash\hspace{0pt}}m{#1}}
\newcolumntype{C}[1]{>{\centering\let\newline\\\arraybackslash\hspace{0pt}}m{#1}}
\newcolumntype{a}{>{\columncolor{Gray}}c}
\definecolor{red}{RGB}{250,0,0}
\definecolor{purple2}{RGB}{120,0,120}
\definecolor{blue2}{RGB}{0,0,180}
\definecolor{green2}{RGB}{0,180,0}
\definecolor{grey2}{gray}{0.9}
\definecolor{grey3}{gray}{0.95}
\newcommand{\R}{\mathbb{R}}
\newcommand{\curl}{\textrm{curl}}
\newcommand{\eps}{\varepsilon}
\renewcommand{\div}{\textrm{div}}
\newcommand{\calV}{\mathcal{V}}
\newcommand{\calT}{\mathcal{T}}
\newtheorem{thm}{Theorem}
\newtheorem{lemma}{Lemma}
\newenvironment{customthm}[1]
  {\innercustomthm}
  {\endinnercustomthm}
\title{Numerical tests for volume enclosed by flux surfaces in toroidal geometry}
\author[1]{D.Martínez-del-Río\footnote{david.martinezdelrio.math@gmail.com}}
\author[1]{R.S.MacKay}
\affil[1]{Mathematics Institute, University of Warwick, Coventry CV4 7AL, UK}
\date{}
\begin{document}

\maketitle

\begin{abstract}
Numerical tests of volume formulae are presented to efficiently compute  the volume enclosed between flux surfaces for integrable 3D vector fields with various degrees of {symmetry}. In the process, a new case is proposed and tested.
\end{abstract}

%\keywords{
\noindent Keywords: Magnetic field, Flux surface, Volume \\ PACS codes: 52.55.-s, 05.45.-a 
%}

\section{Introduction}
\label{sec:intr}

In the design of magnetic confinement devices for plasma, it is often important to determine the volume $V$ enclosed by a flux surface $S$, particularly the volume bounded by the outermost flux surface of a given class within the vacuum vessel. This is one objective function that is considered in computational optimisation of designs, e.g.,~\cite{SIMSOPT}. Given the field, the~volume is usually found by simple 3D integration. 

An  approach presented in~\cite{mackay2024volume} derives expressions for the volume that take advantage of the invariance of $S$ under the magnetic field $B$ for a range of integrable cases, thereby simplifying the conventional three-dimensional integration and potentially enabling more efficient~computation.

The purpose of this work is to test the volume formulae in~\cite{mackay2024volume} numerically on relevant, non-trivial examples, in~order to demonstrate the advantages or potential issues arising from their~use. 

The {explicit} example fields available for numerical testing are rather limited, as~these formulae require {some form of integrability, in~particular} the existence of a {continuous family of flux surfaces and an associated} label function $\Psi$, so we restrict our attention to example fields that are integrable.  This is, however, a~common assumption in the plasma physics context, e.g.,~\cite{helander2014theory}, because~it holds for all non-degenerate magnetohydrostatic (MHS) fields ({\em {MHS} 
} means $J\times B = \nabla p$ for some function $p$, where $J = \curl\, B$, and~{\em {non-degenerate}} %Authors: Italics are necessary.
means $\nabla p \ne 0$ except on a set of measure zero). It is also a common assumption in numerical codes for computing MHS fields and more generally MHD equilibria (but see Ref.~\cite{HHBB} for one that does not make this assumption and presents references to various ones that do.)

Section~\ref{sec:Volume_formulas} provides a brief exposition of the terminology and results from~\cite{mackay2024volume}, plus a new result that treats the case of integrable magnetic fields for which the associated symmetry preserves some density that is not necessarily constant. Section~\ref{sec:magfields} describes the analytical magnetic fields used to test these results. Section~\ref{sec:results} presents our numerical findings, which are discussed in Section~\ref{sec:discussion}, and~the conclusions are summarised in Section~\ref{sec:conc}.

\section{Volume enclosed by flux surfaces}
\label{sec:Volume_formulas}

A magnetic field is a $C^1$ vector field $B$ on an orientable three-dimensional manifold $M$ that preserves a volume form $\Omega$ (for the standard Euclidean volume, this is the condition $\mathrm{div}\, B = 0$). 

We will formulate most of our exposition in the language of exterior calculus because it makes many concepts much simpler to express and some results more evident. It is less familiar to plasma physicists, however, to~whom this work is particularly directed, so we translate into vector calculus language in many~places.

In the language of exterior calculus, to~a magnetic field $B$, an~associated flux 2-form can be  defined as $\beta = i_B \Omega$, allowing the volume-preservation condition to be written as $d\beta = 0$ (i.e., $\beta$ is closed:~its integral over any closed surface contractible to a point is zero). The~vector calculus interpretation of $\beta$ is that acting on a pair of vectors $\xi$ and $\eta$, $\beta(\xi, \eta) = B \cdot dS$ for the area element $dS$ given by the oriented parallelogram spanned by $\xi$ and $\eta$.  So $\beta$ can be integrated over a surface $S$ by chopping $S$ into little parallelograms, summing their contributions and taking the limit.  The~exterior derivative $d\beta$ of $\beta$ is defined to be the 3-form, acting on oriented triples of vectors, that gives the sum of $\beta$ over the faces of the parallelepiped they span, oriented outwards.  The~exterior derivative $d$ on 1-forms is defined similarly, and~on functions $f: \R^3 \to \R$, $df$ is just the ordinary derivative.
A tutorial on the use of differential forms in  plasma physics can be found in~\cite{mackay2020differential}.

The fields relevant to the present work, and~to much of plasma physics, typically assume  the slightly stronger condition that $\beta$ is exact, i.e.,\ $\beta = d\alpha$ for some 1-form $\alpha$. This is equivalent to requiring the existence of a vector potential $A$ such that $B = \nabla \times A$, where $\alpha = A^\flat$, {the 1-form that when acting on any vector $\xi$ produces $A^\flat(\xi) = A \cdot \xi$. Another way to define exactness of $\beta$ is} that $\int_S \beta = 0$ for any closed surface $S$, not only  contractible~ones.

An invariant torus of $B$ is called a \emph{flux surface}.
A magnetic field is called {\em {integrable}}.
 if there is a $C^1$ function $\Psi$ {(called a {\em {flux function}}
 or flux surface label)} such that $d\Psi \ne 0$ almost everywhere (a.e.) and $i_B d\Psi = 0$ (in vector calculus, $\nabla \Psi \ne 0$ a.e.~and $B \cdot \nabla \Psi = 0$, but~the property does not require the Riemannian metric implicit in $\nabla$ and $\cdot$, which is why we prefer to express it using differential forms). For~an integrable field, the~level sets of $\Psi$ are invariant under $B$.  If~$B$ and $d\Psi$ are nowhere-zero on a compact component of a level set, then it is a torus and hence a flux surface.  Furthermore, it has a neighbourhood that is foliated by flux surfaces.
From KAM theory, if~$B$ is non-integrable but smooth enough, then various simple conditions imply a set of positive volumes of flux surfaces, but we will not be using that~here.

To quantify the volume $V$ enclosed by a flux surface $S$, once the surface has been computed, the~enclosed volume can be obtained via a 3D integral $\int_V \Omega$. This can be reduced to a 2D integral $\int_S \nu$ {for some 2-form $\nu$} if the manifold $M$ is contractible (i.e.,\ $\Omega = d\nu$ for some 2-form $\nu$ {and then the result follows by Stokes' theorem}). {For example, if~$\Omega = dx \wedge dy \wedge dz$, as~for a standard volume in Cartesian coordinates, then one can take $\nu = z\, dx \wedge dy$, and integrating this over a closed surface corresponds to integrating the height difference between the points of the surface above a given point $(x,y)$ of the horizontal plane, with~respect to horizontal area.}  However, this direct approach does not take into account the invariance of $S$ under $B$. 

The first formula presented in~\cite{mackay2024volume} for computing the volume that exploits this {invariance is:} 
\begin{equation}
\label{V_general}
V = \int_D \mathsf{T} \beta \,,
\end{equation}
where $D$ is a disk transverse to $B$ whose boundary is on the flux surface $S$, and~$\mathsf{T}$ is the first-return time to $D$ along the fieldline flow $\dot{x}=B(x)$. Although~(\ref{V_general}) is a 2D integral, it involves computing the return-time function, which effectively makes the integration 3D, so there is no real computational reduction 
(in the Appendix, however, a~way to reduce the computation of $V$ from (\ref{V_general}) as a function of $\Psi$ for integrable fields is given).

So we are restricted to a sequence of  special cases of magnetic field, for~which the integration can be reduced.  In~the two strongest cases, it is reduced to 2D; in the other two cases, it is reduced to a bit more than 2D, in~a sense to be~described. 

All cases considered are integrable, but~they differ in their degree of symmetry.
To explain this, note that if $B$ is integrable and non-zero a.e.,~then there is a vector field $u$ independent of $B$ a.e.~such that $L_u\beta=0$, where $L_u$ denotes the Lie derivative along $u$, that is, $u$ is a symmetry of $\beta$.  The~expression of this in vector calculus language is $\curl (B \times u) + (\div B) \, u= 0$.
For example, if~$B$ is integrable (and divergence-free, always assumed), with~flux function $\Psi$, one can take $u = B\times \nabla \Psi/|B|^2 + f B$ using any Riemannian metric and any function $f$.  
But there might be values of $u$ which preserve more than just $\beta$.  For~example, $u$ might also preserve $\rho \Omega$ ($L_u (\rho \Omega)=0$, equivalently $\div(\rho u) = 0$) for some positive function $\rho$, which we call a density.  Or~it might preserve volume $\Omega$ (so constant density, $\div\, u = 0$); in~which case,  $L_u B$  is also zero (it is equal to the commutator $[u,B]$, which can be written as $\curl (B \times u) - \div u \, B + \div B \, u$).  If~in addition $u$ preserves $|B|$ ($L_u |B|=0$, equivalently, $u \cdot \nabla |B|=0$), then we say $u$ is a {\em {weak quasisymmetry}}
%Authors: Italics are necessary.
 of $B$, or it might furthermore preserve $B^\flat$ (defined the same way as $A^\flat$), at which point we say $u$ is a {\em {quasisymmetry}}

  of $B$ (in vector calculus, $L_u B^\flat=0$ is written as $(\curl B) \times u + \nabla(u\cdot B) = 0$).

Note that for an integrable field, it makes sense to ask to compute not just the volume enclosed by one flux surface, but~instead the whole function giving the volume $V(\Psi)$ enclosed as a function of $\Psi$. Our methods will compute this by finding $dV/d\Psi$ as a 1D integral (or a bit more) and then the function $V(\Psi)$ by one more~integration.

The proof of (\ref{V_general}), as~well as those of  Theorems \ref{thm1}--\ref{thm4}, can be found in~\cite{mackay2024volume}.  A~subsection on a new result, Theorem \ref{thm3'}, and~its proof, is~included.

In contrast to what we have just described, we will now treat the cases in order of decreasing~specialisation.

\subsection{Quasisymmetric fields}
\label{subsec:quasisym}
A \emph{{quasisymmetric}}
 {(QS)} magnetic field $B$ is the special case in which {there} exists a vector field $u$ independent from $B$ almost everywhere (in the region of interest), with~\begin{equation}
\label{quasisymm_cond}
L_u \beta=0\,, \qquad
L_u \Omega =0\,, \qquad  
L_u B^\flat=0 \, .
\end{equation}
{Expressions} vector calculus language were given above.
Under mild additional conditions~\cite{burby2020some}, every orbit of $u$ is closed and has the same period $\tau >0$. 

The only known analytic examples of these fields {(with bounded $u$ orbits)} are axisymmetric, where $u= \partial_\phi$ in cylindrical coordinates (the vector field with $\dot{\phi}=1, \dot{r}=0, \dot{z}=0$) (so $\tau = 2\pi$). It is an open question whether there are any other exactly quasisymmetric fields, although~fields with non-axisymmetric quasisymmetry can be constructed to a high degree of accuracy~\cite{LP22}.

The conditions $L_u \beta = 0$ and $L_u \Omega = 0$ imply that $i_u \beta$ is a closed form, i.e.,\ $\int_\gamma B \times u \cdot dl$, and, along a path, $\gamma$ is unchanged under continuous deformation of the path preserving the ends. Assuming there is no homological obstruction (one might want to assume that every loop is contractible to a point but~is stronger than necessary and rules out toroidal geometry; it is enough to suppose that each homology class in dimension 1 can be realised by an orbit of some linear combination of $u$ and $B$), this further implies that $i_u \beta$ is exact; i.e.,~there exists a function $\Psi$ such that $i_u \beta = d\Psi$ ($B\times u = \nabla \Psi$). 
 It follows that both $u$ and $B$ are tangent to the level sets of $\Psi$,
%in particular, 
$\Psi$ is a \emph{flux} or a \emph{label function}, 
  and~that the bounded components of the level sets are~tori.

Using the volume-preserving property and the conditions in (\ref{quasisymm_cond}), ref.~\cite{mackay2024volume} provides the following formula. Choose a $u$-line $\gamma$ and a point $x \in \gamma$, then
\begin{thm}\label{thm1}
$dV = \tau\, T(\Psi)\, d\Psi$, {where} the {return time} $T>0$ is the first time for which the flow of $B$ starting at $x$ returns to $\gamma$.
\end{thm}

\noindent The return time can be proved to depend on $x$ through only $\Psi(x)$.
As~computing the return time $T(\Psi)$ requires a 1D integration, the~entire volume calculation is thereby reduced to a 2D integral. For~clarity, the~return time $T(\Psi)$ corresponds to the time between two successive crossings of a fieldline on $S$ with a $u$-line $\gamma$, while $\mathsf{T}$ in (\ref{V_general}) corresponds to the time between two crossings through a given transverse disk $D$.

This result generalises to the case of a magnetic field with \emph{{weak quasisymmetry}}
  \cite{RHB20}; this is defined by requiring
\begin{equation}
\label{w_quasisymm_cond}
L_u \beta=0\,, \qquad
L_u \Omega =0\,, \qquad  
L_u |B| =0 \,.
\end{equation} 
The only relevant consequence for the volume formula is that now the value of $\tau$ is in general a function of $\Psi$. The~extended result is the following:
\begin{thm}\label{thm2}
$dV = \tau(\Psi)\, T(\Psi)\, d\Psi$.
\end{thm}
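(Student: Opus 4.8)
\noindent\emph{Proof strategy.} The plan is to reduce the three-dimensional volume integral to a two-dimensional integral over the flux surface $S=S_\Psi$ by a coarea argument, and then to evaluate that integral using the product structure on $S$ furnished by the commuting pair $u,B$; this is essentially the proof of Theorem~\ref{thm1} with the constant $\tau$ replaced by a function of $\Psi$. First I would isolate the consequences of the hypotheses that are really used. From $L_u\Omega=0$ and $L_u\beta=0$ one gets $0=L_u\beta=L_u i_B\Omega=i_{[u,B]}\Omega$, hence $[u,B]=0$; and $d(i_u\beta)=L_u\beta-i_u\,d\beta=0$, so $i_u\beta$ is closed and, there being no homological obstruction, $i_u\beta=d\Psi$ for the claimed flux function, with $u$ and $B$ tangent to its level sets. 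The third condition $L_u|B|=0$ is not used in the formula itself; by \cite{RHB20} together with the mild hypotheses of \cite{burby2020some}, its only effect is that the $u$-orbits remain closed, now with a period $\tau=\tau(\Psi)$ that may vary from surface to surface.

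For the coarea step, write $\Omega=d\Psi\wedge\iota$ for a $2$-form $\iota$; the restriction $\iota|_{S_\Psi}$ is well defined because $d\Psi$ vanishes on $TS_\Psi$, and Fubini in the variable $\Psi$ gives $\int_{\{\Psi\le\Psi_0\}}\Omega=\int^{\Psi_0}\big(\int_{S_\Psi}\iota\big)\,d\Psi$, whence $dV/d\Psi=\int_{S_\Psi}\iota$ away from critical points of $\Psi$. To compute $\int_{S_\Psi}\iota$, fix a value with $d\Psi\ne0$ and $u,B$ nonzero and independent on $S_\Psi$, choose a smooth base point $x_0(\Psi)\in S_\Psi$, and set $\tilde F(s,t,\Psi)=\phi^B_t\circ\phi^u_s\big(x_0(\Psi)\big)$, where $\phi^u,\phi^B$ are the flows of $u,B$. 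Because $[u,B]=0$ the flows commute, so $\tilde F$ is a local diffeomorphism with $\tilde F_*\partial_s=u$, $\tilde F_*\partial_t=B$ and $\tilde F^*\Psi=\Psi$, and it intertwines $s$- and $t$-translations with $\phi^u$ and $\phi^B$. Since $L_u\Omega=L_B\Omega=0$, the pullback $\tilde F^*\Omega$ is invariant under those translations, hence $\tilde F^*\Omega=h(\Psi)\,ds\wedge dt\wedge d\Psi$ for a function of $\Psi$ alone; pulling back the identity $i_u i_B\Omega=d\Psi$ then forces $h\equiv\pm1$, the sign fixed by choosing the orientation that makes $V$ increasing in $\Psi$. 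Therefore $\tilde F^*\iota=\pm\,ds\wedge dt$ on each $\Psi$-slice.

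The remaining step is topological. On the torus $S_\Psi$ the $u$-orbit $\gamma$ through $x_0(\Psi)$ is closed of period $\tau(\Psi)$; since $u$ and $B$ are independent there, $\gamma$ is a global cross-section of the $B$-flow on $S_\Psi$, with first-return time $T(\Psi)$, and, the flows commuting, the return map is the rigid rotation $x_0\mapsto\phi^u_{c(\Psi)}(x_0)$ of $\gamma$ by some shift $c(\Psi)$. Thus $\tilde F(\cdot,\cdot,\Psi)$ descends to a diffeomorphism $\mathbb{R}^2/\Lambda\to S_\Psi$ with lattice $\Lambda=\langle(\tau,0),(c,T)\rangle$, and a fundamental domain has Euclidean area $\tau(\Psi)\,T(\Psi)$ (the determinant of the lattice basis). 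Integrating $\tilde F^*\iota=\pm\,ds\wedge dt$ over this domain yields $\int_{S_\Psi}\iota=\tau(\Psi)\,T(\Psi)$, i.e.\ $dV=\tau(\Psi)\,T(\Psi)\,d\Psi$.

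I expect the real obstacle to be this last, global part: verifying that $\gamma$ is genuinely a global cross-section of the $B$-flow covering $S_\Psi$ exactly once, so that no extra winding inflates the fundamental domain, and pinning down a clean open subregion on which $d\Psi$, $u$ and $B$ are simultaneously nondegenerate — the standing hypotheses only guarantee $d\Psi\ne0$ a.e., so one must restrict to the part of $M$ actually foliated by flux tori and stay away from the magnetic axis, where $dV/d\Psi$ need not even be defined. Carrying the orientation signs through consistently, so as to arrive at the positive formula rather than its negative, is a minor but necessary piece of bookkeeping.
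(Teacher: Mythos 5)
Your proposal is correct and follows essentially the same route as the paper's method: it uses $[u,B]=0$ to build Arnol'd--Liouville-type coordinates on each flux surface, normalises the volume form via $i_u i_B\Omega=d\Psi$ (exactly the paper's step of applying $i_u i_{B/\rho} i_n$ in the proof of Theorem 3$'$), and identifies $\tau(\Psi)\,T(\Psi)$ as the determinant $\Delta$ of the period lattice generated by $(\tau,0)$ and $(-c,T)$, which is precisely how Theorems 1--3 specialise. The global points you flag (that $(\tau,0)$ and $(-c,T)$ generate the full lattice because $\tau$ is the minimal $u$-period and $T$ the first return time, and the sign/orientation bookkeeping) are the right ones and are handled the same way in the cited source.
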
 

\noindent {Thus,} 
 the~computation of the volume is still equivalent to a 2D integration.  It is an open question again, however, whether there are any weak QS fields that are not~axisymmetric.

\subsection{Fields with Flux-Form~Symmetry}
\label{subsec:flux-form_sym}
The next case considered in~\cite{mackay2024volume} is magnetic fields $B$ with \emph{{flux-form symmetry}}
, defined as having a volume-preserving {vector field} $u$ independent from $B$ almost everywhere, such that $L_u \beta = 0$ (recall, this means $\curl(B\times u)=0$ because we are assuming $\div B = 0$). Perhaps the name is insufficiently clear, because~for the definition it is essential that $u$ be volume-preserving, not just $L_u\beta=0$, but~we continue to use it.
It is equivalent to require $u$ to be a volume-preserving field that commutes with $B$ (e.g.,~\cite{burby2020some}).  It holds for all non-degenerate MHS fields that~$u = J$, though~all examples that we know of such fields are axisymmetric (so are already covered by {Theorem} 
 \ref{thm1}).

Under the same homological condition, this implies again that $i_u\beta = d\Psi$ for some function $\Psi$ whose level sets are invariant under both $u$ and $B$. {Furthermore, the~commutation condition implies that $(u, B)$ generates an action $\varphi$ of $\R^2$ on the flux surfaces. For~each pair $(t_1,t_2) \in \R^2$ and point $x$ on a flux surface, $\phi_{(t_1,t_2)}x$ is given by flowing from $x$ for time $t_1$ with $u$ and time $t_2$ with $B$, in~any order because the two fields commute.  This is a place where formulation in vector calculus makes it harder to reach the result, though~it was achieved by Hamada~\cite{Ha62}.  
From the condition $\curl(B\times u) - \div u\, B + \div B\, u = 0$, one has to prove that flowing along $u$ for a time $t_1$ and along $B$ for time $t_2$ commutes.}
Consequently, for~each flux surface, there exists a lattice $\Gamma \subset \R^2$ of pairs $t \in \R^2$ of times such that $\varphi_t = \mathrm{Id}$ %\strblue{($t \in \R^2$)} 
if and only if $t \in \Gamma$. Let ${T}_1, {T}_2 \in \mathbb{R}^2$ be generators of $\Gamma$. Form a matrix $\mathcal{T}$ with the vectors ${T}_1$ and ${T}_2$ as columns, and~let $\Delta = \det \mathcal{T}$, which is a function of $\Psi$. The~volume formula for this case can then be written as
\begin{thm}\label{thm3}
$dV = \Delta(\Psi)\, d\Psi$.
\end{thm}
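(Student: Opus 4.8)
The plan is to reduce the volume formula $dV=\Delta(\Psi)\,d\Psi$ to the general formula (\ref{V_general}), $V=\int_D \mathsf{T}\beta$, by choosing the transverse disk $D$ cleverly using the $\R^2$-action $\varphi$. The key observation is that because $(u,B)$ generates an action of $\R^2$ on each flux surface, a transverse disk to $B$ whose boundary lies on $S$ can be swept out by flowing a $u$-line along $B$: fix a flux surface $S=\{\Psi=\text{const}\}$, fix a point $x$ on it, and consider the map $(s,t)\mapsto \varphi_{(s,t)}(x)$ from $\R^2$ to $S$, which descends to a diffeomorphism of the torus $\R^2/\Gamma$ onto $S$. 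I would like to integrate over the solid torus bounded by $S$ by foliating it by such tori for $\Psi'\le\Psi$, and compute $\int_V\Omega$ as an iterated integral.

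The key steps, in order: (1) Set up coordinates adapted to the action — on a neighbourhood of the solid torus enclosed by $S$, use $\Psi$ as one coordinate together with "time" coordinates $(s,t)$ along $u$ and $B$ respectively, valid locally; the level sets of $\Psi$ are invariant under both flows so this is consistent. (2) Express the volume form $\Omega$ in these coordinates. Here I would use $i_u\beta=d\Psi$ together with $\beta=i_B\Omega$: from these, $i_u i_B\Omega = d\Psi$, so $\Omega(u,B,\cdot)=d\Psi(\cdot)$, which pins down the coefficient of $\Omega$ relative to $du\wedge\text{(time form)}$. More precisely, if $\partial_s,\partial_t$ denote the infinitesimal generators of the action (so $\partial_s=u$, $\partial_t=B$), then $\Omega(\partial_s,\partial_t,\nabla\Psi\text{-direction})$ is essentially $1$ up to the $d\Psi$ pairing, giving $\Omega = d\Psi\wedge ds\wedge dt$ in suitable orientation. (3) Integrate: $\int_V\Omega = \int_0^\Psi d\Psi'\int_{\R^2/\Gamma(\Psi')} ds\,dt$, and the inner integral is exactly the area of the fundamental domain $\R^2/\Gamma(\Psi')$, which is $|\det\mathcal{T}(\Psi')| = \Delta(\Psi')$. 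Differentiating in $\Psi$ gives $dV=\Delta(\Psi)\,d\Psi$.

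The main obstacle I anticipate is step (2) — making rigorous the identification of $\Omega$ with $d\Psi\wedge ds\wedge dt$, because $(s,t)$ are only defined up to the lattice $\Gamma$ and the generators $u,B$ need not be orthogonal or unit-speed; one must check that the formula $i_ui_B\Omega=d\Psi$ forces exactly the right Jacobian so that no extra density factor appears, and that the orientation is consistent across the foliation. A secondary subtlety is the possible degeneracy where $u$ and $B$ become parallel or where $d\Psi=0$ (a set of measure zero by integrability), which must be handled by a limiting/continuity argument since $V(\Psi)$ is continuous. One also needs the homological hypothesis stated in the excerpt to guarantee $i_u\beta$ is globally exact, giving a well-defined $\Psi$, and to ensure $\Gamma$ has rank $2$ so that the flux surfaces are genuinely tori; I would invoke these as already established before starting the computation.
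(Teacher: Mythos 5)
Your argument is correct and is essentially the paper's own route: Theorem 3 is here obtained as the $\rho\equiv 1$ case of Theorem 3$'$, whose proof's key step is exactly your step (2) --- using $i_u i_B\Omega = d\Psi$ (together with $i_n d\Psi=1$) to pin down $\Omega = \Delta\, d\theta_1\wedge d\theta_2\wedge d\Psi$ in coordinates adapted to the $\R^2$-action, which is your $d\Psi\wedge ds\wedge dt$ after the linear change of variables by $\mathcal{T}$. The only cosmetic difference is that you work in the time coordinates $(s,t)$ with lattice $\Gamma$, so $\Delta$ appears as the area of the fundamental domain, whereas the paper uses angle coordinates normalized to $\mathbb{Z}^2$ and $\Delta$ appears as the coefficient of the volume form; the Jacobian $\det\mathcal{T}$ reconciles the two.
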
 

\subsection{Integrable Fields with Symmetry Preserving a~Density}
\label{subsec:int_fields_dens}
In preparing this paper, we realised that several examples, including one that we treat numerically, satisfy $L_u\beta=0$, but with $u$ preserving a non-constant density $\rho$, i.e.,~$L_u (\rho \Omega)=0$ for some smooth positive function $\rho$.  Examples of this case that are not covered by a preceding Theorem are given by the fields with helical symmetry from~\cite{KMM23}, which is the example we will treat here, and~MHD equilibria with flow and electrostatic potential.  In~the latter example, the~electrostatic potential provides an integral; the symmetry field is the plasma velocity, which in general is not volume-preserving but does preserve plasma~mass.

Although this case can be treated by Theorem \ref{thm4} below, we have derived a potentially more efficient formula for it that we explain~here.

{As before, $L_u\beta=0$ plus a homological condition implies there is a flux function $\Psi$. The~next step is to notice that $u$ commutes with $B/\rho$.}
%Lemma: 

\begin{lemma}
\label{lemma1}
$L_u\beta=0$ and $L_u(\rho \Omega)=0$ imply $[u,B/\rho]=0$.
\end{lemma}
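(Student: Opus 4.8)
The plan is to work in the language of exterior calculus and compute the Lie derivative $L_u(B/\rho)$ directly, showing it vanishes. The key observation is that the vector field $B/\rho$ is naturally characterised by the flux form $\beta = i_B\Omega$ together with the density form $\rho\Omega$: indeed, $i_{B/\rho}(\rho\Omega) = i_B\Omega = \beta$. So the single equation
\begin{equation}
\label{pf_key_identity}
i_{B/\rho}\,(\rho\Omega) = \beta
\end{equation}
determines $B/\rho$ uniquely (since $\rho\Omega$ is a nowhere-zero volume form, contraction with it is an isomorphism from vector fields to 2-forms). The strategy is then: apply $L_u$ to both sides of \eqref{pf_key_identity}, use that $L_u$ commutes with contraction via the Cartan-type identity $L_u(i_X\omega) = i_{[u,X]}\omega + i_X(L_u\omega)$, and read off that $i_{[u,B/\rho]}(\rho\Omega) = 0$, whence $[u,B/\rho]=0$ by the same isomorphism.

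In detail, first I would recall the identity $L_u(i_X\omega) = i_{[u,X]}\omega + i_X L_u\omega$, valid for any vector fields $u,X$ and any differential form $\omega$; this is standard and follows from $L_u = d\,i_u + i_u\,d$ together with the fact that $L_u$ is a derivation commuting with $d$. Applying this with $X = B/\rho$ and $\omega = \rho\Omega$ gives
\begin{equation}
\label{pf_expand}
L_u\!\left(i_{B/\rho}(\rho\Omega)\right) = i_{[u,\,B/\rho]}(\rho\Omega) + i_{B/\rho}\,L_u(\rho\Omega).
\end{equation}
By hypothesis $L_u(\rho\Omega) = 0$, so the last term drops. By \eqref{pf_key_identity} the left-hand side is $L_u\beta$, which is zero by the other hypothesis. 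Hence $i_{[u,B/\rho]}(\rho\Omega) = 0$. Since $\rho>0$, the 3-form $\rho\Omega$ is a volume form on the three-manifold $M$, so the map $Y \mapsto i_Y(\rho\Omega)$ from vector fields to 2-forms is a pointwise linear isomorphism; therefore $[u,B/\rho] = 0$, which is the claim.

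I do not anticipate a genuine obstacle here: the proof is a short formal manipulation once one spots that $B/\rho$ is the vector field dual to $\beta$ with respect to the modified volume form $\rho\Omega$. The only point requiring a word of care is the non-degeneracy used twice — that $\rho\Omega$ is nowhere zero, which is guaranteed by $\rho$ being a \emph{positive} function and $\Omega$ a volume form — and the derivation identity \eqref{pf_expand}, which some readers may prefer to see justified via $L_u i_{B/\rho} - i_{B/\rho} L_u = i_{[u,B/\rho]}$ on forms. Neither is a real difficulty. An equivalent vector-calculus verification (expanding $\nabla\times(A) $-type terms and using $\operatorname{div}(\rho u)=0$, $\operatorname{div} B = 0$, and $L_u\beta=0$) is possible but messier, so the differential-forms route is the one I would present.
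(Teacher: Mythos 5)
Your proof is correct, and it rests on the same key identity as the paper's, namely $i_{[u,X]}\omega = L_u i_X\omega - i_X L_u\omega$ applied to a nowhere-degenerate $3$-form, followed by the observation that contraction with a volume form is an isomorphism from vector fields to $2$-forms. The one genuine difference is your choice of which volume form to contract against: the paper takes $\omega=\Omega$, writes $i_{B/\rho}\Omega = \beta/\rho$, and must then expand both Lie derivatives by the product rule, producing two $\tfrac{L_u\rho}{\rho^2}\beta$ terms that cancel; you instead take $\omega=\rho\Omega$, for which $i_{B/\rho}(\rho\Omega)=\beta$ exactly, so both hypotheses $L_u\beta=0$ and $L_u(\rho\Omega)=0$ plug in verbatim and the conclusion $i_{[u,B/\rho]}(\rho\Omega)=0$ drops out with no intermediate computation. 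Your version is shorter and makes the structural point more visible --- that $B/\rho$ is precisely the $\beta$-dual vector field with respect to the invariant volume $\rho\Omega$, so its invariance under $u$ is immediate --- at the (negligible) cost of having to note that positivity of $\rho$ keeps $\rho\Omega$ nondegenerate. Either argument is complete; yours would be a mild improvement on the paper's.
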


 \noindent {In}
 vector calculus language, this says that $\curl(B\times u) + \div B\, u=0$ and $\div(\rho u)=0$ (together with $\div B=0$ assumed throughout) implies $\curl(\frac{B}{\rho}\times u) - \div u\, \frac{B}{\rho} + \div\frac{B}{\rho}\, u = 0$.  A~vector calculus reader may prove this for themselves.  We give an exterior calculus proof.
\begin{proof}
%Proof: 
A standard result for commutators of vector fields yields
\begin{equation}
i_{[u,B/\rho]}\Omega = L_u i_{B/\rho}\Omega - i_{B/\rho} L_u\Omega .
\label{eq:comm}
\end{equation}
Recalling that $i_B \Omega = \beta$, the~first term gives
$$\frac{1}{\rho} L_u\beta - \frac{L_u\rho}{\rho^2} \beta  = - \frac{L_u\rho}{\rho^2} \beta ,$$
using $L_u\beta=0$.
Next, use $L_u(\rho\Omega)=0$ to obtain $L_u\Omega = -\frac{L_u\rho}{\rho} \Omega$.  Then the second term of (\ref{eq:comm}) gives $-\frac{L_u\rho}{\rho^2}\beta$, which cancels the first.  As~$\Omega$ is non-degenerate, $i_{[u,B/\rho]}\Omega=0$ implies that $[u,B/\rho]=0$. 
\end{proof}

Together with the linear independence of $u$ and $B$, this implies that  on any flux surface $S$,  the pair $(u, B/\rho)$ of vector fields induces an action $\phi$ of $\R^2$, and~the set of pairs $t \in \R^2$ of times for which the resulting $\phi_t$ is the identity forms a lattice $\Gamma \subset \R^2$.  As~for Theorem~\ref{thm3}, let $T_1, T_2 \in \R^2$ be generators for $\Gamma$ and $\Delta$ be the determinant of the matrix $\calT$ they form. Furthermore, the~action $\phi$ allows one to parametrise the flux surface by a pair of angle variables $(\theta_1, \theta_2)$ modulo 1, which evolve at constant speeds with respect to the $u$ flow and the $B/\rho$ flow.  They are called {\em {Arnol'd--Liouville}} %Authors: The italics are needed here.
 (AL) coordinates~\cite{A}.

To relate the volume enclosed by $S$ to its value of $\psi$, one more ingredient is required, namely the {\em {harmonic average}} 
 $\hat{\rho}$ of $\rho$ on $S$, with~respect to the AL coordinates $(\theta_1,\theta_2)$ on it:
$$\frac{1}{\hat{\rho}} = \int_S \frac{1}{\rho}\, d\theta_1 \wedge d\theta_2.$$
This can be computed as the limit of the average of $1/\rho$ at the points of a regular subdivision of the lattice $\Gamma$; i.e.,~for a large enough integer $q$, choose a starting point on $S$, flow with $(u,B/\rho)$ for vector-times $(n_1 T_1 + n_2 T_2)/q$ with integers $n_1, n_2 \in \{0, \ldots q-1\}$, evaluate $1/\rho$ there, and~take the average.  If~everything is analytic, then by Fourier analysis the error is exponentially small in $q$ (see~\cite{weideman2002numerical} 
for an entertaining exposition of this), so in practice one can expect to get away with a relatively small $q$.  In~this sense, we think of evaluating $\hat{\rho}$ as only a bit more than a 1D~integral.

We obtain the following theorem for the change in the volume enclosed by a flux surface for a change in the value of $\Psi$, numbered 3' because its hypotheses are intermediate between those for Theorems 3 and 4 {of}~\cite{mackay2024volume}.

\begin{customthm}{3'} \label{thm3'}
{If} 
 $L_u\beta=0$ and $L_u(\rho\Omega)=0$, then $$dV = \frac{\Delta}{\hat{\rho}} d\Psi .$$
\end{customthm} 

\begin{proof}
Recall that in AL coordinates $(\theta_1,\theta_2)$, the~vector fields $u$ and $B/\rho$ have constant components on each flux surface.
Then our key step is to notice that
\begin{equation}
\Omega = \frac{\Delta}{\rho} d\theta_1 \wedge d\theta_2 \wedge d\Psi.
\label{eq:key}
\end{equation}
To prove this, let $n$ be a vector field such that $i_n d\Psi=1$ (equivalently, $n\cdot \nabla \Psi = 1$), e.g.,~$n = \nabla \Psi/|\nabla \Psi|^2$ (for an arbitrary Riemannian metric), and apply $i_u i_{B/\rho}i_n$ to both sides of (\ref{eq:key}).  Applied to $\Omega$ this gives $\frac{1}{\rho} i_u i_B i_n\Omega$ (this is $\frac{1}{\rho}$ times the triple product of $(n,B,u)$), which reduces to $\frac{1}{\rho}$ because $i_u i_B\Omega = d\Psi$ and $i_n d\Psi = 1$.
Applied to $d\theta_1 \wedge d\theta_2 \wedge d\Psi$, it gives $\det \calV$, where
$\calV$ is the matrix formed by the $(\theta_1,\theta_2)$ components of $u$ and $B/\rho$.  Here we have again used that $i_n d\Psi = 1$.  Now $\calV \calT$ is the identity matrix, and so 
$$i_u i_{B/\rho} i_n \frac{\Delta}{\rho} d\theta_1 \wedge d\theta_2 \wedge d\Psi = \frac{1}{\rho}.$$  Because the space of top-forms at a point is one-dimensional, the~two sides of (\ref{eq:key}) are~equal.

Then it follows by integration over $S$ that $dV = \frac{\Delta}{\hat{\rho}} d\Psi$. %$\Box$
\end{proof}

Apart from the extra factor $\hat{\rho}$, using this formula is a simple modification of the case of Theorem \ref{thm3}.

\subsection{Integrable~Fields}

After {Theorem} 
 \ref{thm3}, ref.~\cite{mackay2024volume} presents a weaker version, in~which {$L_u\beta=0$ but} $u$ is not required to be volume-preserving, nor even to preserve a density, so it is weaker than Theorem \ref{thm3'} too. This case is called {\em {integrable}} %Authors: Italics are needed here.
  because it is equivalent to the existence of a flux function $\Psi$ whose level sets are invariant (modulo the homological condition).  All the previous cases are also integrable but with more conditions on the symmetry field $u$. 
The result~is

\begin{thm}\label{thm4}  
Choose a closed curve $\gamma$ on each flux surface, depending smoothly on $\Psi$.  Then
$dV = \bar{T}\, d\Phi$,
where 
\begin{equation*}
    \Phi = \int_\gamma A^\flat \,,
\end{equation*}
and $\bar{T}$ is defined as the limiting average return time to $\gamma$ of an infinitely long $B$-line starting at any point of $\gamma$. \end{thm}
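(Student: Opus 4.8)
\medskip
\noindent\textbf{Proof proposal.} The plan is to reduce everything to the flow of $B$ on a single flux surface $S=S_\Psi$, which is a flow on a $2$-torus carrying a natural invariant area form. Two preliminaries: since $i_B\beta=i_Bi_B\Omega=0$ and a $2$-form on a surface annihilated by a nowhere-zero vector field must vanish, $\beta$ pulls back to $0$ on each $S$, so $\alpha=A^\flat$ pulls back to a \emph{closed} $1$-form there, and $\Phi=\int_\gamma A^\flat$ depends only on $\Psi$ and on the (locally constant) homology class of $\gamma$. Secondly, writing $\Omega=d\Psi\wedge\sigma$ near $S$ (possible because $d\Psi\neq0$ there), the restriction $\sigma|_S$ is well defined (the ambiguity in $\sigma$ is a multiple of $d\Psi$), is an area form on $S$, and is invariant under the flow of $B|_S$, because $0=L_B\Omega=d\Psi\wedge L_B\sigma$ forces $L_B\sigma=d\Psi\wedge(\cdot)$, which restricts to $0$ on $S$. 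Write $\mathcal A(\Psi)=\int_S\sigma|_S>0$. A thin-shell computation then gives $dV/d\Psi=\mathcal A(\Psi)$, since the region between $S_\Psi$ and $S_{\Psi+\delta\Psi}$ has $\Omega$-volume $\int_\Psi^{\Psi+\delta\Psi}\mathcal A\,d\Psi'$. So it remains to prove $\mathcal A\,d\Psi=\bar T\,d\Phi$, and I would do that by evaluating $d\Phi/d\Psi$ and $\bar T$ separately in terms of the flux $F:=\int_\gamma (i_B\sigma)|_S$ of $B$ through $\gamma$ inside $S$ (note $(i_B\sigma)|_S$ is closed, using $L_B\sigma|_S=0$ and $d\Psi\wedge d\sigma=0$, so $F$ is well defined and nonzero when $\gamma$ is a genuine cross-section).

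For $d\Phi/d\Psi$, move $\gamma_\Psi$ along a transverse vector field $W$ normalised by $i_Wd\Psi=1$; since $\alpha|_S$ is closed, $d\Phi/d\Psi$ is independent of how the family is transported, so this is legitimate. By Cartan's formula and Stokes on the closed curve $\gamma$, $d\Phi/d\Psi=\int_\gamma i_W\,d\alpha=\int_\gamma i_W\beta$, and substituting $\beta=-d\Psi\wedge i_B\sigma$ and restricting to $\gamma\subset S$ gives $d\Phi/d\Psi=\pm F$, the sign fixed by the orientations of $\gamma$, $\Omega$ and the direction of increasing $\Psi$.

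For $\bar T$, the change of variables $(y,t)\mapsto\phi_t(y)$ with $y\in\gamma$ and $0\le t<\mathsf T_\gamma(y)$ (where $\mathsf T_\gamma$ is the first-return time to $\gamma$) carries $dt\wedge(i_B\sigma)|_S$ to $\sigma|_S$ — because flow-invariance makes the relevant Jacobian independent of $t$ — so $\int_\gamma\mathsf T_\gamma\,(i_B\sigma)|_S=\mathcal A$. The first-return map to $\gamma$ preserves the probability measure $(i_B\sigma)|_S/F$, so the limiting average return time along a field line equals, by Birkhoff's ergodic theorem, the space average $\tfrac1F\int_\gamma\mathsf T_\gamma\,(i_B\sigma)|_S=\mathcal A/F$. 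Hence $dV=\mathcal A\,d\Psi=\tfrac{\mathcal A}{F}(F\,d\Psi)=\bar T\,d\Phi$, with all signs positive once orientations are chosen compatibly (which is anyway forced by $\bar T>0$ and $dV>0$). If $\gamma$ is not transverse to $B|_S$, one first replaces it by a homologous transverse curve, changing neither $\Phi$ nor $\bar T$, since the count of $\gamma$-crossings along a long field line then changes only by a bounded amount.

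The step I expect to be the main obstacle is justifying ``time average $=$ space average for \emph{every} starting point on $\gamma$'', rather than for almost every one: this needs unique ergodicity of $B|_S$, which on a non-resonant flux surface follows from Kolmogorov's theorem on flows on $T^2$ (a fixed-point-free flow preserving a smooth positive density is $C^0$-conjugate to an irrational linear flow), modulo some regularity of the field, and which fails on the resonant surfaces, where $\bar T$ genuinely depends on the chosen closed orbit. Since the resonant surfaces form a measure-zero set of $\Psi$-values and $V(\Psi)$ is recovered by integrating $dV$, the formula still does what is asked; packaging this caveat, together with the routine but fiddly sign and orientation bookkeeping, is where most of the care goes.
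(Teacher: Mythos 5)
Your argument is correct and is essentially the argument behind this theorem: note that the paper itself does not reproduce the proof (it is deferred to \cite{mackay2024volume}), but the route there is the same one you take --- an invariant area form $\sigma|_S$ from $\Omega=d\Psi\wedge\sigma$ giving $dV/d\Psi=\mathcal{A}$, the identity $d\Phi/d\Psi=\pm F$ via $\beta=-d\Psi\wedge i_B\sigma$, and the flow-box/Kac identity $\int_\gamma \mathsf{T}_\gamma\,(i_B\sigma)=\mathcal{A}$ combined with unique ergodicity to get $\bar{T}=\mathcal{A}/F$. Your caveat about resonant surfaces is a genuine feature of the statement, not a defect of your proof: there the pointwise limit $\bar{T}$ can depend on the starting orbit, and the formula should be read for the (full-measure) set of non-resonant $\Psi$, which suffices for recovering $V(\Psi)$ by integration.
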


\noindent {Note} 
 that $\Phi$ is the magnetic flux across a disk spanning $\gamma$; 
{ref.} .
 \cite{mackay2024volume} gives a way to compute it if the vector potential is not given explicitly.
One can compute $\frac{d\Phi}{d\Psi}$, so this is again in the form of evaluating $dV/d\Psi$.  We call it a bit more than a 1D integral because evaluating the average return time is longer than a bounded 1D~integral.

A question is whether there are integrable magnetic fields for which there is no choice of symmetry field $u$ that preserves a density. Integrability implies $B$ is conjugate to $\lambda C$ for some function $\lambda$ and constant vector $C$ for each flux surface~\cite{N},  
a conclusion that would follow if $\beta$ has a symmetry field $u$ that preserves $\lambda \Omega$, so this suggests that the answer is no.  Indeed, {ref.}% DMR: The addition is adequate.
~\cite{PDP}  
shows that in any ``toroidal component'' for an integrable magnetic field, there is a choice of symmetry field that preserves a density.  Their construction uses a result of~\cite{PPS}, 
which in turn uses a result of Calibi that does not look explicit to us, but~they give explicit choices in particular contexts. For~example, if~$\curl\, B$ preserves the same function $\Psi$, then $u = B \times \nabla \Psi/|B|^2$ works, with~$\rho = |B|^2$.  As~another example, if~$B$ has a global cross-section, then Birkhoff proved there is an angle variable $\phi$ that increases by 1 for each return to the section, and~\cite{PDP} proved that there is a symmetry field $u$ preserving $\rho=L_B\phi$ (namely the vector field along the contours of $(\phi,\Psi)$ normalised to make $i_ui_B\Omega = d\Psi$). 
An interesting challenge is to work out the set of all compatible pairs $(u,\rho)$ for a general integrable $B$-field, including at least one explicit case.
A curious thing is that the construction of~\cite{PDP} always produces a symmetry field with a rational winding ratio, whereas there are axisymmetric cases for which this is not the case.  Indeed, one can add any flux function times $B/\rho$ to $u$ and get another symmetry preserving the same density, and this will make the winding ratio of $u$ change continuously, in~particular through irrational~values.

As a result of this discussion, Theorem \ref{thm4} may turn out to have little value.  Nevertheless, if~the construction of a compatible pair $(u,\rho)$ is awkward in an example, then it might still be useful.
So we also test it.

%%%% -- %%%%  -- %%%%  -- %%%%  -- %%%% 

\section{Tested magnetic fields}
\label{sec:magfields}
The present work considers two different kinds of magnetic fields to test the volume formulae presented in the previous section. For~Theorem \ref{thm1}, we consider a simple axisymmetric tokamak field as in~\cite{burby2023isodrastic}. For~Theorems \ref{thm4} and \ref{thm3'}, we use a different integrable vector field consisting of an axisymmetric field perturbed by a single helical mode, described in~\cite{KMM23}. 

\subsection{Axisymmetric Magnetic~Field}
\label{subsec:axisymm}
As the only known quasisymmetric fields are the axisymmetric ones, our chosen example to test the quasisymmetric volume formula is axisymmetric.
Taken from Section~3.2 in~\cite{burby2023isodrastic}, an~example of an axisymmetric tokamak magnetic field in cylindrical polar coordinates $(R,\phi,z)$, is given by contravariant components
\begin{equation}
\label{axisym_B}
B^R = \frac{-z}{R}\,, \qquad
B^\phi = \frac{C}{R^2}\,, \qquad
B^z = \frac{r}{R} ,
\end{equation}
in a solid torus $r^2 + z^2 \leq r_0^2$ for some $r_0<1$, with~$r=R-1$, and~$C>0$. This field has a closed line for $z=0$, $R=1$,  {called} the `magnetic axis'. Corresponding to the symmetry $u = \partial_\phi$, the~fieldlines preserve a flux function,
\begin{equation}
\label{flux_func_axisymm}
\Psi = \frac{1}{2}\left( r^2 + z^2 \right).
\end{equation}

The flux surfaces for this field are two-tori of minor radius $\sqrt{2\Psi}$ and major radius $R_0 = 1$. Therefore the volume enclosed between a flux surface $\Psi>0$ and the magnetic axis ($\Psi=0$) is given by {a relatively simple integral that evaluates to}
\begin{equation}
\label{vol_axisym}
V= 
%[\pi (\sqrt{2\Psi})^2]\times[2\pi (1)] = 
4 \pi^2 \Psi\,.
\end{equation}

\subsection{Toroidal Helical Magnetic~Fields}
\label{subsec:helical}
To test the volume formula of Theorem \ref{thm4} for  fields with {only integrability}, we opted to use the magnetic fields considered in~\cite{KMM23,MM25}, which correspond to perturbations of a circular tokamak field by helical modes, based on~\cite{kallinikos2014integrable}. 
Readers interested in  the details can  refer to~\cite{KMM23,MM25}.
As it turns out, to preserve a density, however, we take the opportunity to also test Theorem~\ref{thm3'}.

The fields considered have 
a circular magnetic axis of radius $R_0>0$ in the horizontal plane $z=0$.  They  are best described and treated in an adapted toroidal coordinate system $(\psi,\vartheta,\phi)$,  
which is a variant of the standard toroidal coordinates $(r,\theta,\phi)$. The~latter coordinates are related to Cartesian coordinates $(x,y,z)$ through 
$$x = R \sin\phi,\ y = R\cos \phi,\ z = r\sin \theta,$$
where
$$R= R_0 + r\cos\theta,$$
for  $0\le r<R_0$.  $R$ represents the cylindrical radius relative to the $z$-axis. In~these {standard} coordinates, the~metric tensor is represented by the matrix $\text{diag}(1,r^2,R^2)$.

Following~\cite{kallinikos2014integrable}, the~adapted coordinates $(\psi,\vartheta)$  are introduced to simplify the restriction of the magnetic flux-form $\beta$ to a poloidal section ($\phi=$ constant), and through some sensible choices (like setting $\psi$ as the toroidal magnetic flux across the poloidal disk of radius $r$ and selecting $B^\phi = {B_0R_0}/{R^2}$)~\cite{kallinikos2014integrable, abdullaev2014, KMM23}, we get
\begin{align}
\begin{split}
&\psi= B_0R_0^2\left(1-\sqrt{1-\frac{r^2}{R_0^2}}\right)\\
&\tan \frac{\vartheta}{2} = \sqrt{\frac{R_0-r}{R_0+r}}\tan\frac{\theta}{2}.
\end{split}
\end{align}

In terms of the covariant components of $A$, the~contravariant components of $B$ are given by
$$B^{i} = \frac{1}{\sqrt{|g|}} \epsilon^{ijk} \partial_j A_k,$$
where $\epsilon$ is the Levi-Civita symbol and $|g|$ is the determinant of the matrix $g$ representing the metric tensor, $ds^2 = g_{ij} dx^i dx^j$. In~our adapted toroidal coordinates, the~volume factor
\begin{equation}\sqrt{|g|} = 1/B^\phi=R^2/(B_0R_0).
\label{eq:volfactor}
\end{equation}

We take a vector potential, with one helical mode introduced in its toroidal component, of~the form {(in covariant components)}
\begin{align}
\label{eq:potential}
\begin{split}
A_\psi &= 0\\
A_\vartheta &= \psi \\
A_\phi &= -[w_1\psi+w_2\psi^2 + \eps \psi^{m/2} f(\psi) \cos(m\vartheta-n\phi + \zeta)],
\end{split}
\end{align}
where $w_1\in\R$, $w_2\ne 0$, $m,n$ are integers with $m\ge 2$, $f$ are smooth functions and $\zeta$ is an arbitrary~phase.

The vector potential (\ref{eq:potential}) gives rise to the magnetic field $B$, %$B=(B_0R_0/R^2) V$, 
{whose} (contravariant) components are
\begin{align}
\label{eq:fields}
\begin{split}
B^\psi &= \frac{B_0R_0}{R^2}\left[m\eps\psi^{m/2} f(\psi)\sin(m\vartheta-n\phi + \zeta)\right] \\
B^\vartheta &= \frac{B_0R_0}{R^2} \left[w_1 + 2 w_2 \psi +  \eps \psi^{m/2-1} \left[\tfrac{m}{2}f(\psi)+\psi f'(\psi)\right]\cos(m\vartheta-n\phi + \zeta)\right]\\
B^\phi &= \frac{B_0R_0}{R^2} .
\end{split}
\end{align}
The cylindrical radius $R$ occurring in the conversion from $V$ to $B$ can be expressed in our adapted coordinates via
$$R = \frac{R_0^2-r^2}{R_0-r\cos\vartheta}$$
with $$r = R_0 \sqrt{1-\left(1-\frac{\psi}{B_0R_0^2}\right)^2}.$$

Because we take $m\ge 2$, all the fields have $\psi=0$ as a closed fieldline, which can be considered the {\em {magnetic axis}}.

As mentioned in~\cite{KMM23},  with~the addition of a single helical mode, the~field is still integrable~\cite{kallinikos2014integrable}.  Indeed, it has the invariant (i.e., integral of motion)
\begin{equation}
\Psi = -n\psi-mA_\phi\,,
\label{Psi_hellical}
\end{equation}
and the symmetry field
\begin{equation}
u = n\partial_\vartheta + m \partial_\phi \,.
\label{u_hellical}
\end{equation}
They are related by $i_u\beta = -d i_u A^\flat = d\Psi$. In~vector calculus, this is $B \times u = \nabla \Psi$, but~one has to use the metric to express it in the adapted toroidal coordinates.
However, for $n\ne 0$ (else the field is axisymmetric), $u$ is not volume-preserving:
\begin{equation}
L_u \Omega =  -2n \frac{r\sin\vartheta}{R_0-r\cos\vartheta} \, \Omega ,
\end{equation}
so we can not use Theorem \ref{thm3}.
Nonetheless, from~(\ref{eq:volfactor}), $u$ preserves the density $B^\phi$, so in addition to testing the formula of Theorem \ref{thm4}, we also test the new formula of Theorem \ref{thm3'}.

As in~\cite{KMM23,MM25}, the~particular field considered takes the following values and function for the vector potential~(\ref{eq:potential}).
\begin{align}
\label{eq:standard_values}
\begin{split}
\begin{aligned}
w_1& = 1/4,\\
w_2& = 1,
\end{aligned}\qquad
\begin{aligned}
B_0&=1,\\
R_0&=2,
\end{aligned}\qquad
\begin{aligned}
\zeta&= 0,\\
f(\psi)&= \psi-R_0^2/B_0.
\end{aligned}
\end{split}
\end{align}

\vspace{0.2cm}
The poloidal plane plots for this field will use  ``symplectic'' coordinates, defined as
\begin{align}
\label{eq:sym_cords}
\begin{split}
\tilde{y}& = \sqrt{2\psi/B_0}\cos\vartheta\\
\tilde{z}& = \sqrt{2\psi/B_0}\sin{\vartheta}
\end{split}
\end{align}
on the poloidal section $\phi=0$. Near~the magnetic axis, this is a small distortion (especially for a large aspect ratio $r/R_0 \longrightarrow 0$) of the true $yz$-plane $x=0$, but~with area equal to toroidal flux and the magnetic axis shifted to the~origin.

%////////////////////////////

%////////////////////////////

As can be observed from Figure~\ref{Fig_coords_comp_2_1}, the~label function $\Psi$ in~(\ref{Psi_hellical}) can switch between three different regions in the poloidal plane---inner tori, magnetic islands, and~outer tori---
which are to be 
distinguished. Therefore, the~volume computation should be carried out over different intervals of $\Psi$. Additionally, it is expected that the slow dynamics of fieldlines near the separatrices will yield divergent values for $T(\Psi)$; consequently, the~volume estimations given by~(\ref{V_general}) and {Theorems \ref{thm3'} and \ref{thm4}}  should be computed carefully, but~the integrals {are still expected to} converge.\vspace{-12pt}

\begin{figure}[ht!]
 \centering  
{\includegraphics[height=7.3cm]{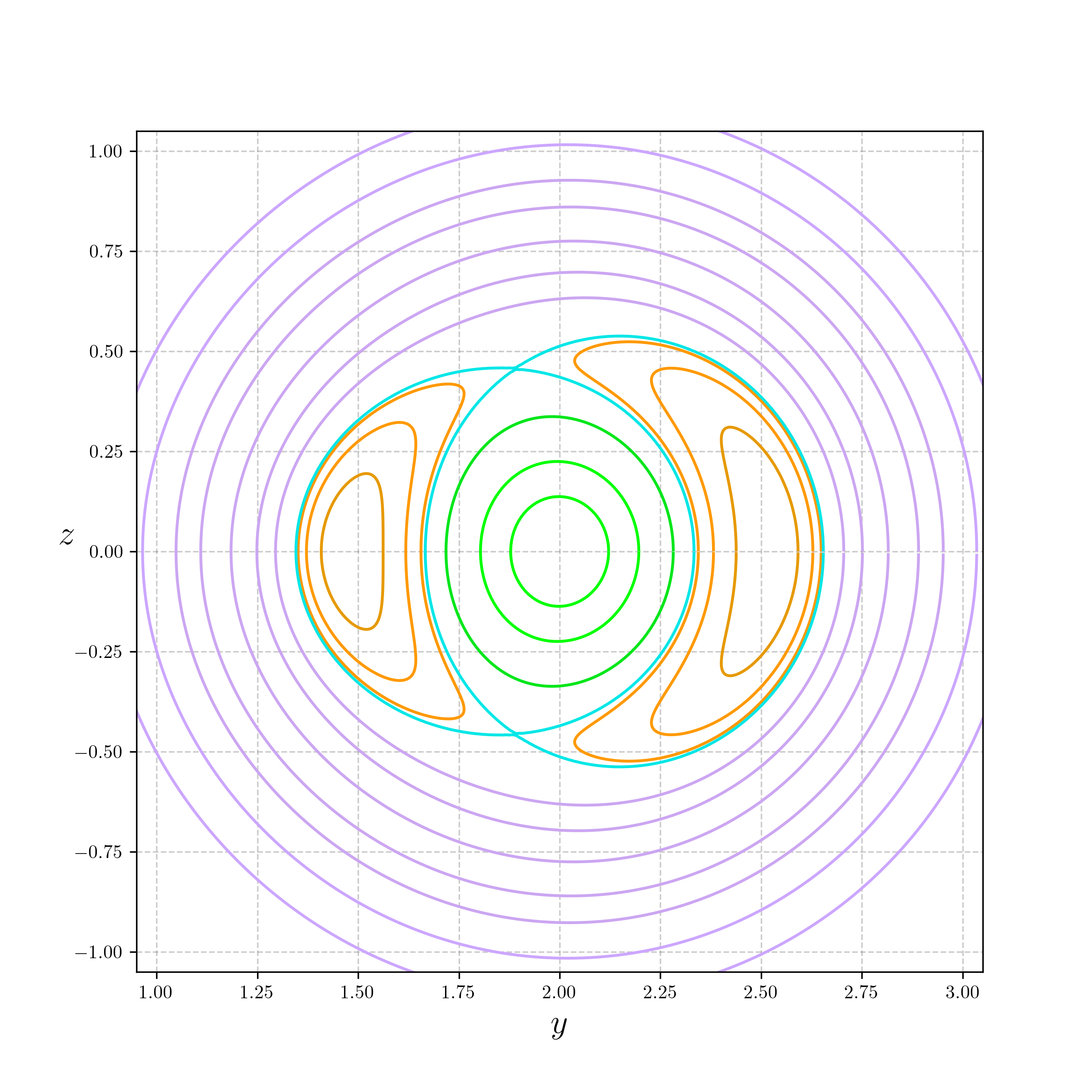}}
{\includegraphics[height=7.3cm]{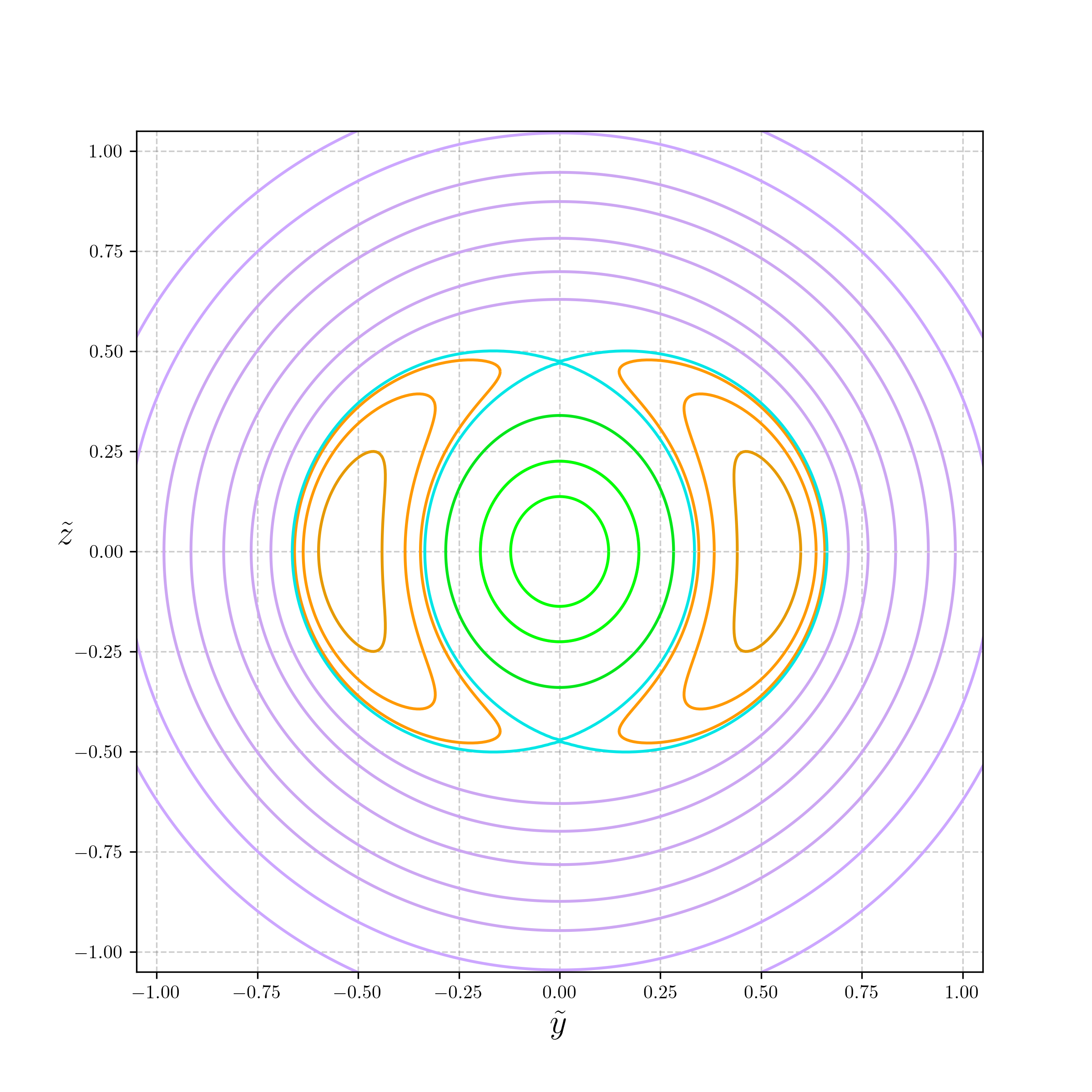}}
   \caption{{Level} 
 sets of $\Psi$ on the poloidal section $\phi=0$ for $(m,n)=(2,1)$ and standard values given by (\ref{eq:standard_values}), in~Cartesian (left) and symplectic (right) coordinates. The~tori are coloured according to the region: inner (green), magnetic island (yellow), outer (magenta) and the separatrices (cyan).}
   \label{Fig_coords_comp_2_1}
\end{figure}

% ///////////////////////////////////////////////////////////
\section{Results}
% ///////////////////////////////////////////////////////////
\label{sec:results}

In this section, we  apply the formulae in Equation~(\ref{V_general}) and Theorem~\ref{thm1} to the axisymmetric field (\ref{axisym_B}), and~the Equation~(\ref{V_general}) and Theorems \ref{thm3'} and \ref{thm4}  to the field (\ref{eq:fields}). As~previously mentioned, the~results in all  forthcoming figures are presented over the poloidal plane $\phi=0$. 

For the computation of the volume Formula (\ref{V_general}),
we employ a regular grid in a convenient set of Cartesian coordinates $(\mathsf{x},\mathsf{y})$ over the chosen poloidal plane, in~order to approximate the area element appearing in the volume formula.  
More precisely, for~the volume enclosed between the flux surfaces $\Psi_{0}$ and $\Psi_{1}$ (with $\Psi_{1} > \Psi_{0}$), we consider
\begin{equation}
\label{int_domain}
\mathcal{D}=\{(\mathsf{x}_m, \mathsf{y}_n) \in \mathbb{R}^2 : \Psi_0 < \Psi(\mathsf{x}_m,\mathsf{y}_n) < \Psi_1 \},
\end{equation}
where $(\mathsf{x}_m, \mathsf{y}_n)$ are the nodes of an $N_1 \times N_2$ regular grid over the domain  
$[\mathsf{x}_0 - L_1, \mathsf{x}_0 + L_1] \times [\mathsf{y}_0 - L_2, \mathsf{y}_0 + L_2].$
If $\beta$ in these coordinates (restricted to the poloidal plane) takes the form
\begin{equation}
\label{beta_P}
\beta_P = f(\mathsf{x},\mathsf{y}) \, d\mathsf{x} \wedge d\mathsf{y},
\end{equation}
then the volume Formula~(\ref{V_general}) is approximated by
\begin{equation}
    \label{eq:area}
    V \sim \frac{4L_1 L_2}{N_1 N_2}\sum_{i=1}^{N_1} \sum_{j=1}^{N_2} \mathsf{T}(\mathsf{x}_i,\mathsf{y}_j)\, 
    |f(\mathsf{x}_i,\mathsf{y}_j)|
    \,\cdot\, 
    1_{\{(\mathsf{x}_i,\mathsf{y}_j)\in \mathcal{D}\}}\, .
\end{equation}
In the first example, the~axisymmetric magnetic field, the~convenient coordinates are $(R,z)$ over the poloidal plane $\phi =0$. In~this case, $\beta$ restricted to the poloidal section takes the form:
\begin{equation}
    \label{beta_P_cil}
    \beta_P = -B^\phi\, R\,\, dR \wedge dz = -\frac{C}{R} \, dR \wedge dz \,,
\end{equation}
therefore, for~this case: $|f(R_i,z_j)| = C/R_i$.

For the second example, we employ the symplectic coordinates $(\tilde{y}, \tilde{z})$, since the areas are equivalent whether computed in $(\tilde{y}, \tilde{z})$ or in $(\psi, \vartheta)$,  by~
$d\tilde{y} \wedge d\tilde{z} = B_0^{-1} \, d\psi \wedge d\vartheta$.
Thus, $\beta$ restricted to the poloidal section $\phi=0$ can be written as,
\begin{equation}
\label{beta_P_hellical}
    \beta_P = B^\phi\,\sqrt{|g|}\, d\psi \wedge d\vartheta =  B_0\, d\tilde{y}\wedge d\tilde{z} \,;
\end{equation}
therefore, for~the second example: $|f(\tilde{y}_i,\tilde{z}_j)|=|B_0|$.

Note, however, that the accuracy of (\ref{eq:area}) cannot be expected to be better than $O((N_1 N_2)^{-3/4})$; see for example, the~classic cases of Gauss and Dirichlet for the number of grid points inside a circle or under a hyperbola~\cite{Hux}. 
Partly for this reason, and~partly to see if we could improve the efficiency of computation of $V(\Psi)$ as a function of $\Psi$ (not just individual values of $\Psi$), we also tested the method in the Appendix. 
For this purpose, we established an algorithm to determine the level sets of $\Psi(\tilde{y},\tilde{z},\phi=0)=\Psi_0$ on the poloidal plane, %in symplectic coordinates and 
which then is discretised  into a regular partition of $N_g$ points $\lbrace \nu_j \rbrace$. With~it, we computed (\ref{eq:vol_lambda}) by,
\begin{equation}
\label{eq:V_lambda_disc}
    \frac{dV}{d\Psi}(\Psi_0) \sim \sum_{j=1}^{N_g} 
    \mathsf{T}(\nu_j) i_{\epsilon_j} \lambda \,,
\end{equation}
where $\epsilon_j$ denotes half displacement between $\nu_{j-1}$ and $\nu_{j+1}$, and~$\lambda = i_B i_n \Omega$, with~$n= \nabla \Psi/|\nabla\Psi|^2$. To~simplify the computations for the second example, we used the diagonal metric
$$ds^2 = \frac{1}{2B_0\psi} d\psi^2 + \frac{2\psi}{B_0} d\vartheta^2 + R_0^2 d\phi^2,$$
to compute $\nabla\Psi$ and~$i_{\epsilon_j} \lambda = \det W /B^\phi$, where $W$ is a matrix whose columns are the contravariant components of $(n, B, \eps_j)$ in the $(\psi, \vartheta,\phi)$ coordinate~system.

The implementation of (\ref{eq:vol_lambda}) for the first example used a discretisation $\nu_j$ of the level sets of $\Psi$ with coordinates $R_j = (1+\sqrt{2\Psi}\cos\theta_j), z_j = \sqrt{2\Psi}\sin\theta_j$
%$\nu_j = (1+\cos\theta_j) e_R + \sin\theta_j e_z$, 
and $\theta_j = 2\pi/N$, $j=0,1,\dots,N-1$. Following the steps in the Appendix A, %\ref{sec:app},  
(\ref{eq:V_lambda_disc}) reduces in this case to
\begin{equation}
    \frac{dV}{d\Psi}(\Psi_0) 
    %\int_{\partial D(\Psi)} \mathsf{T} \lambda 
    \sim C \sum_{j=0}^{N-1} \frac{\mathsf{T}(\nu_j) \sin(2\pi/N)}{1+\sqrt{2\Psi}\cos\theta_j}\,.
\end{equation}

In contrast, for~the integration of the volume formulae in Theorems \ref{thm1}, \ref{thm3'} and \ref{thm4}, and~the integration of (\ref{eq:V_lambda_disc}), the~trapezoidal rule was applied over a uniform grid in $\Psi$, and~the LSODA algorithm (Python's {\tt scipy} implementation) was employed for the field-line integration. All codes used an a priori short integration time for the fieldline (to get away from the initial line) and then extended it until the minimum number of crossings, $N_c$, required by the corresponding formula was~attained.

Details about the codes used in the computations are provided in the Supplementary Materials.

\subsection{Axisymmetric Magnetic Field---Theorem~\ref{thm1}}
\label{subsec:res_theo1}

The left subplot in Figure~\ref{Fig_V_axissym} shows a comparison between the computations of the volume enclosed by the flux surfaces using the general Formula~(\ref{V_general}) and Theorem~\ref{thm1}, for~the axisymmetric magnetic field~(\ref{V_general}), as~a function of the flux function $\Psi$, defined in~(\ref{flux_func_axisymm}). The~analytic volume given in~(\ref{vol_axisym}) is also included. The~volume formula~(\ref{V_general}) was computed over a regular array of points contained within a disc $D$ of radius $\sqrt{2\Psi}$, with~$\Psi$ taken from~(\ref{flux_func_axisymm}). For~the volume computation using Theorem~\ref{thm1}, only one initial point $x = (1 + \sqrt{2\Psi}, 0)$ was used to compute $T(\Psi)$. The~right inset in Figure~\ref{Fig_V_axissym} shows an example of the regular grid used to compute (\ref{vol_axisym}) and the flux surfaces corresponding to a uniform discretisation on $\Psi$ used in Theorem \ref{thm1}.

%////////////////////////////
\begin{figure}[ht!]
 \centering  
{\includegraphics[height=8.0cm, trim={20 0 10 0},clip]{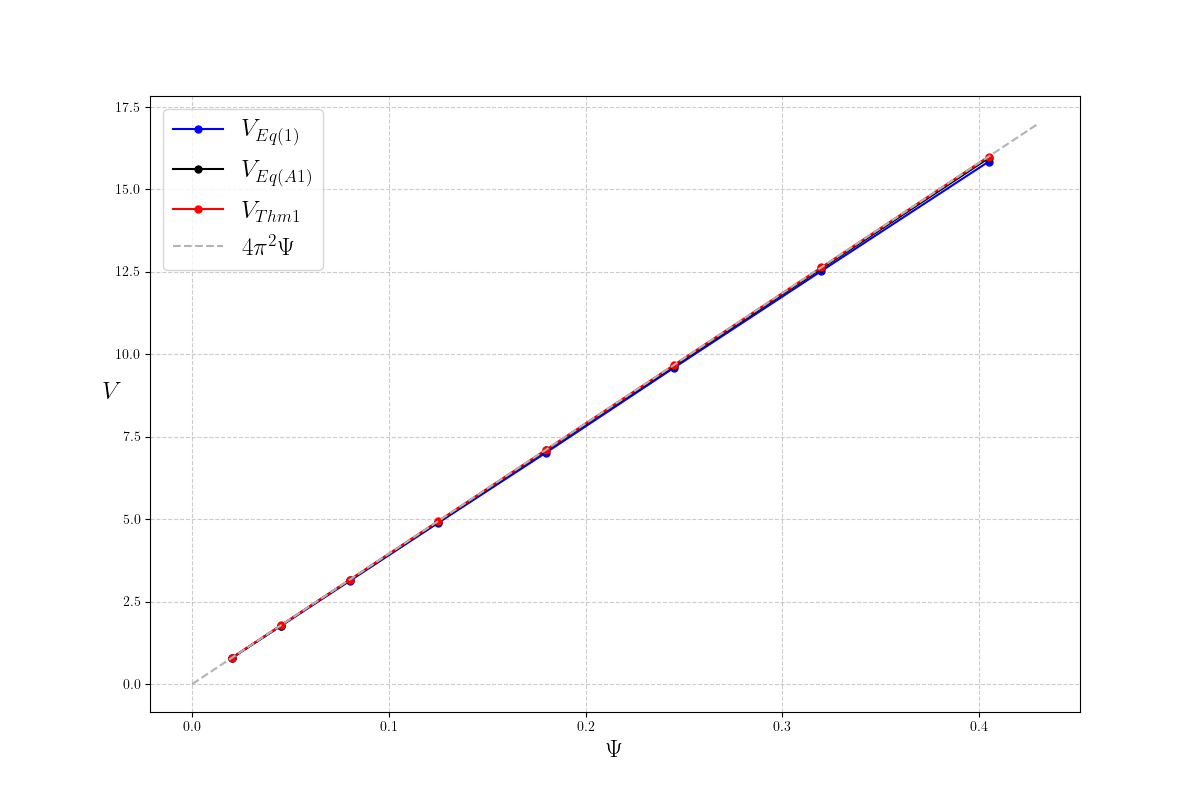}}
{\includegraphics[height=8.0cm, trim={0 0 0 0},clip]{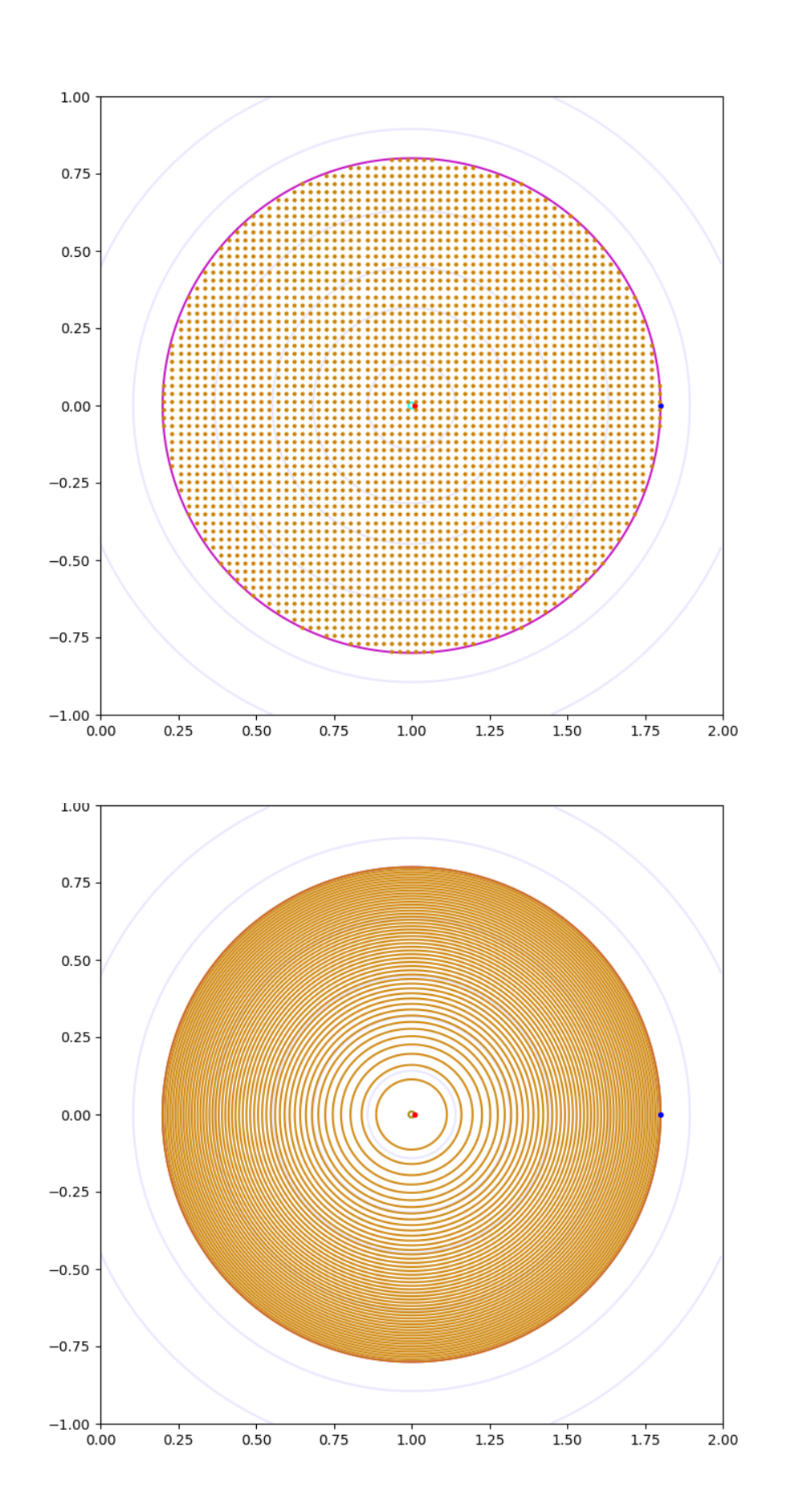}}
   \caption{(Left) Volume enclosed by the flux-surface computed by (\ref{V_general}) (blue), (\ref{eq:vol_lambda})  (black), Theorem~\ref{thm1} (red) and exact formula (\ref{vol_axisym}) (dashed grey) as a function of $\Psi$ for the field (\ref{axisym_B}) , with~$C=1$. (Right) {Example} 
 of the grid ({top}) and set of flux surfaces ({bottom}) used to compute (\ref{V_general}) and Theorems \ref{thm1}, respectively.}
   \label{Fig_V_axissym}

\end{figure}
\unskip
%////////////////////////////

\subsection{Toroidal Helical Magnetic Field---Theorems \ref{thm3'} and~\ref{thm4}}
\label{subsec:res_theo3}

The example considered corresponds to the magnetic field derived from (\ref{eq:potential}) for one mode, namely the resonance $2/1$, that is
\begin{equation}
\label{A_ex2}
A_\phi = -\left[\psi/4 + \psi^2 + \eps\psi(\psi-4)\cos(2\vartheta-\phi) \right].
\end{equation}
Thus, (\ref{Psi_hellical}) becomes $\Psi = -\psi - 2 A_\phi$, which implies that vector field $u$ takes the form,
\begin{equation}
\label{u_hellical_21}
u = \partial_\vartheta + 2 \partial_\phi \,.
\end{equation}  

Write
\begin{equation}
    v = B/\rho \,,
    \label{eq:v}
\end{equation}
with $\rho = B^\phi$.

As the vector field $u$ in (\ref{u_hellical_21}) is constant with integer components (and thus commensurate), the~$u$-lines are closed and all have period $2\pi$ in this case. Consequently, one of the lattice generators can be written as ${T}_1 = (2\pi, 0)^{\mathrm{T}}$ (superscript $^T$ denotes transpose, and~flowing for time $2\pi$ along $u$ and for $0$ along $v$ produces the identity map on any flux surface).
The generator ${T}_2$ can be obtained through the return time ${T}$ along $v$ to any chosen $u$-line, using ${T}_2 = (-c, {T})^{\mathrm{T}}$, where $-c$ is the time needed to flow along $u$ to reach the initial point again. 
This is illustrated in Figure~\ref{T12_diagram} for the tori in the inner and outer regions.  Thus, the~determinant in the volume formula for this case (Theorem \ref{thm3'}) takes the form $$\Delta(\Psi) = 2\pi T(\Psi)\,,$$ 
where $T(\Psi)$ is the return time to the $u$-line along the scaled field $v$.
The value of $c$ is irrelevant for this.   Thus, one factor in the volume formula for this example happens to take a similar form as in Theorem \ref{thm1}. 
%////////////////////////////
\begin{figure}[ht!]
 \centering  
{\includegraphics[height=5.2cm, trim={0 -20 0 0},clip]{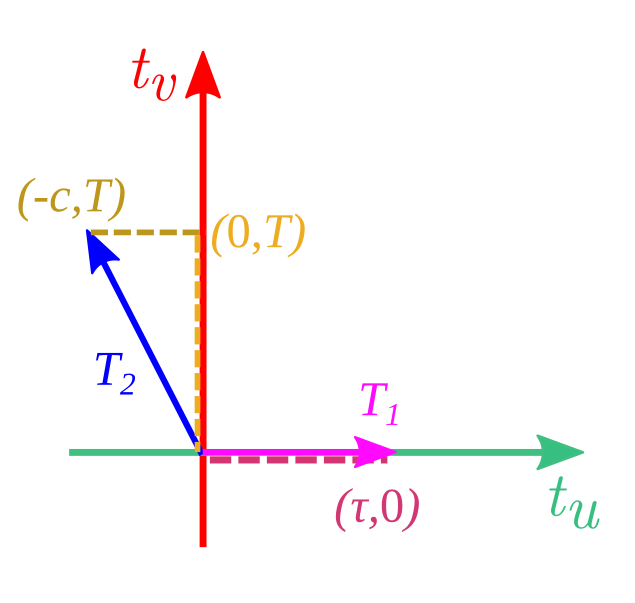}}
{\includegraphics[height=6.2cm, trim={0 10 50 0},clip]{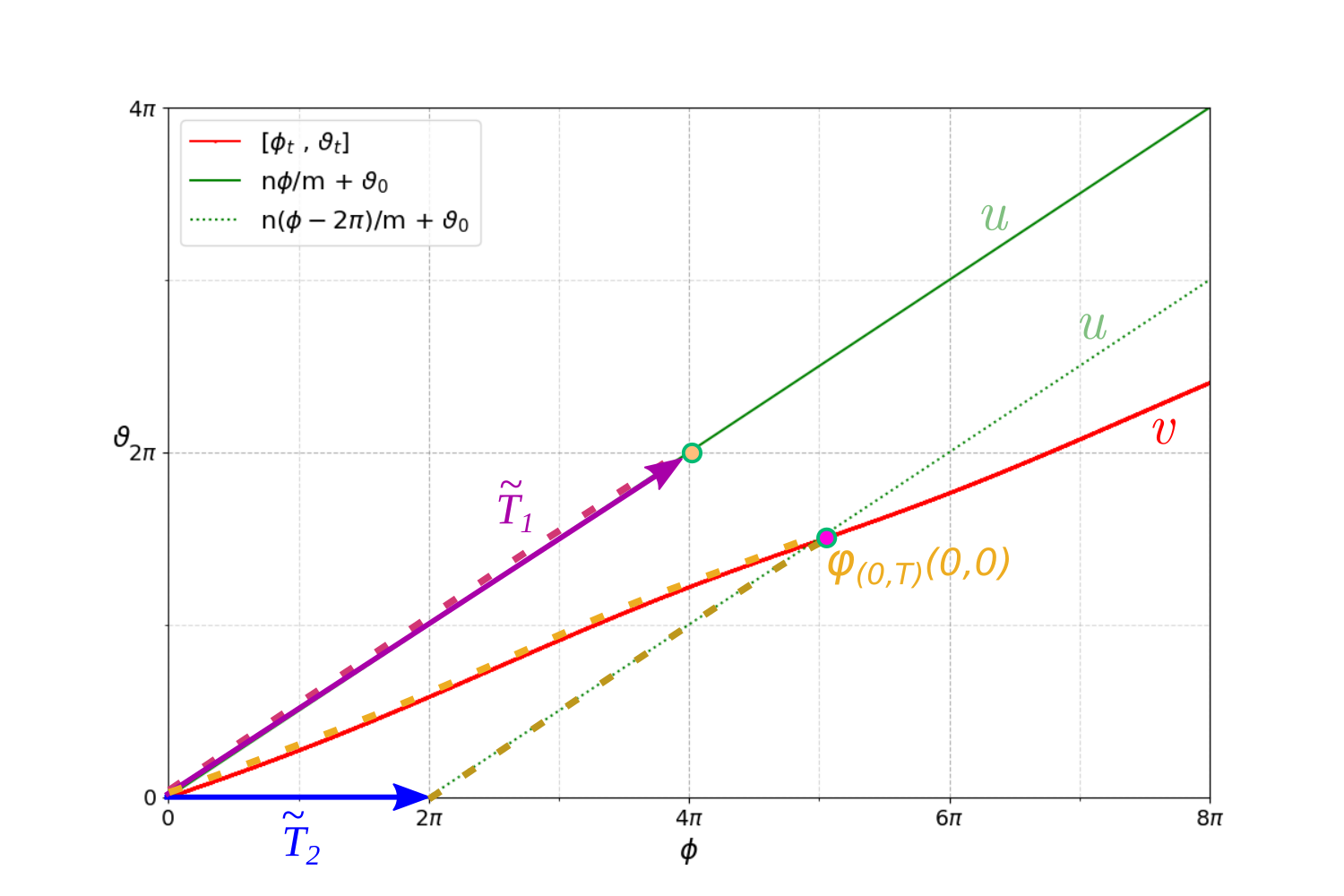}}
\caption{
   {Selection}
 of the lattice generators $T_{1}$ and $T_{2}$ for a torus in the inner region. (Left): Generators {$T_j$} in $(t_{u}, t_{v})$-coordinates, corresponding to the flow times under the $u$- and $v$-fields. (Right): Vectors $\tilde{T}_{j}$, the~images of $T_{j}$ under $\varphi_T(0,0)$ in the $(\phi, \vartheta)$-plane.  $\tilde{T}_1$ (in purple) is drawn over an orbit of $u$ (in green) 
   through the initial point $(0,0)$; one translate of the $u$-orbit is also shown in green. The~construction of $\tilde{T}_2$ requires the flow of $T$  units under $v$ (along the red curve)  and then $-c$ units under $u$. 
   }
   \label{T12_diagram}
\end{figure}
%////////////////////////////

The other factor, namely  $1/\hat{\rho}$, is computed by averaging $1/\rho$ over a regular subdivision of the lattice $\Gamma$, as~described in Section~\ref{subsec:int_fields_dens}.   
Figure~\ref{fig:avrho} shows  $1/\hat{\rho}$ computed in the three regions for subdivisions by $q=4-12$.
We see convergence for practical purposes when $q=6$, except~near the separatrix.  {The}~convergence near the separatrix is less good, because~analyticity of the AL coordinates fails in the separatrix limit, but~the true value of $1/\hat{\rho}$  is just the time-average of $1/\rho$ along the hyperbolic closed fieldline ($\approx$1.94399).   
The approach of $1/\hat{\rho}$ to the separatrix value is consistent with the form $\delta \Psi \log \delta \Psi$, as~is to be expected from the typical logarithmic divergence of return times near a hyperbolic periodic orbit. 
We use $q=6$ in our calculations of enclosed~volume.

In
 the case of tori within a magnetic island, the~visualisation on the $(\phi,\vartheta)$ plane is insufficient, as~these coordinates are not well suited to describing an island torus. In~these coordinates, the~$v$-orbit appears to oscillate around the $u$-line, as~shown in the right-hand inset of Figure~\ref{T12_diagram_island}. 
But the first apparent intersection of the $v$-orbit with the $u$-line is actually an illusion caused by a two-to-one projection of the flux surface to $(\phi,\vartheta)$. This apparent first crossing triggers the code to search for a second crossing ($N_c = 2$, instead of $1$ as in the two other regions).

For Theorem~\ref{thm4}, choosing $\gamma$ to be a $u$-line (i.e., $\psi$ = constant and $2\vartheta-\phi$ = constant), 
we have that,
\begin{align}
\label{Phi_computation}
\begin{split}
    \Phi =& \int_\gamma A^\flat = 
    \int_\gamma (A_\psi d\psi + A_\vartheta d\vartheta 
               + A_\phi d\phi )  \\
    =& \int_\gamma \{\psi d\vartheta 
        -\left[\psi/4 + \psi^2 + \eps\psi(\psi-4)\cos(2\vartheta-\phi) \right] d\phi )\}\\
    =& 2\pi \psi - 4\pi[\psi/4 + \psi^2 
         + \eps\psi(\psi-4)\cos(2\vartheta_0-\phi_0)] \\
    =& -2\pi \Psi(\psi,\vartheta_0,\phi_0) \,.
\end{split}
\end{align}
Therefore, the~volume formula for this example again takes a form similar to that in Theorem~\ref{thm1}: $dV = 2\pi \bar{T}\, d\Psi,$
where the average $\bar{T}$ is computed using the unscaled magnetic field $B$. We found it to be sufficiently accurate when estimated by averaging the first ten crossings.

\vspace{-12pt}
%////////////////////////////
\begin{figure}[ht!]
 \centering  
{\includegraphics[height=5cm, trim={10 0 30 0},clip]{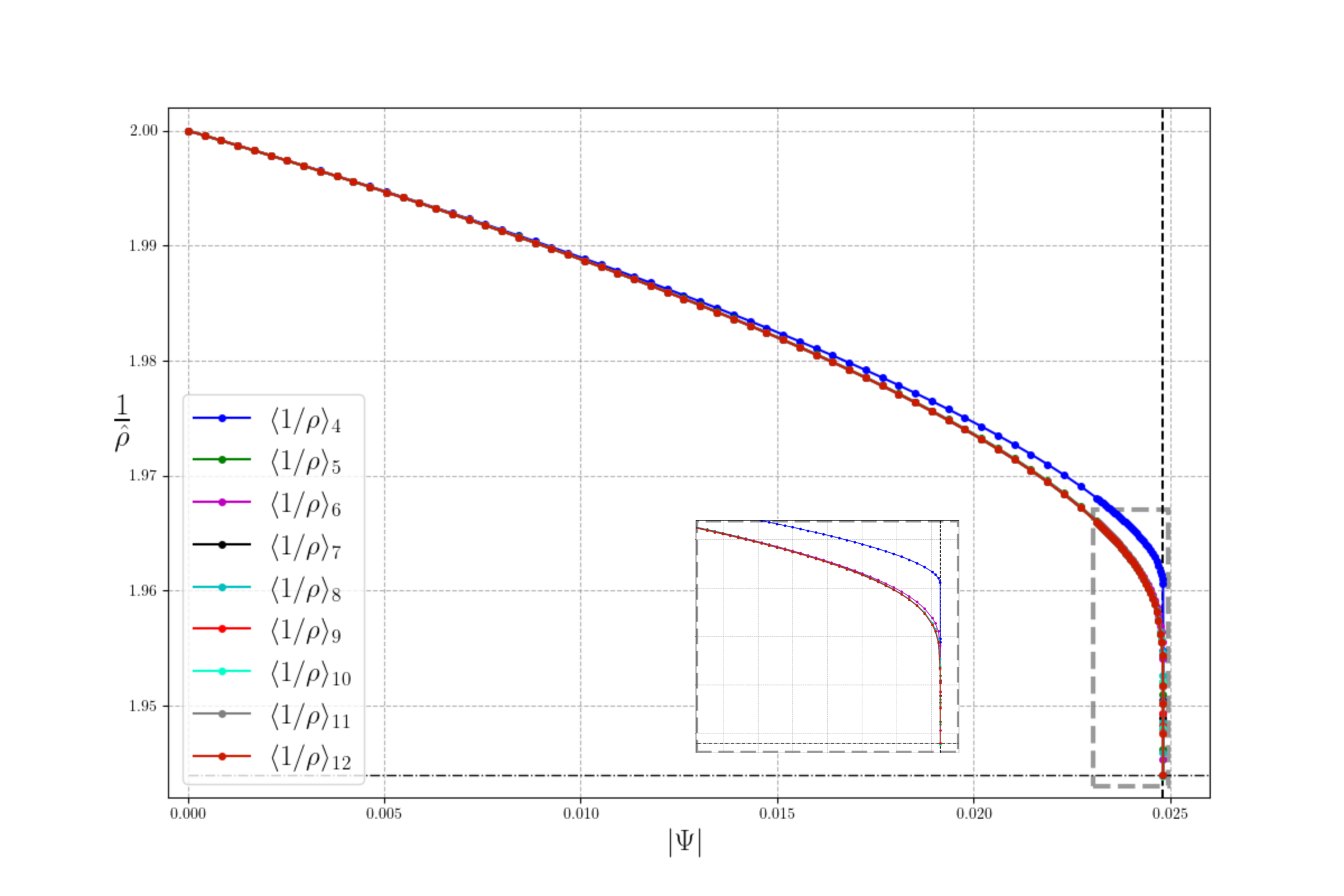}}
{\includegraphics[height=5cm, trim={10 0 30 0},clip]{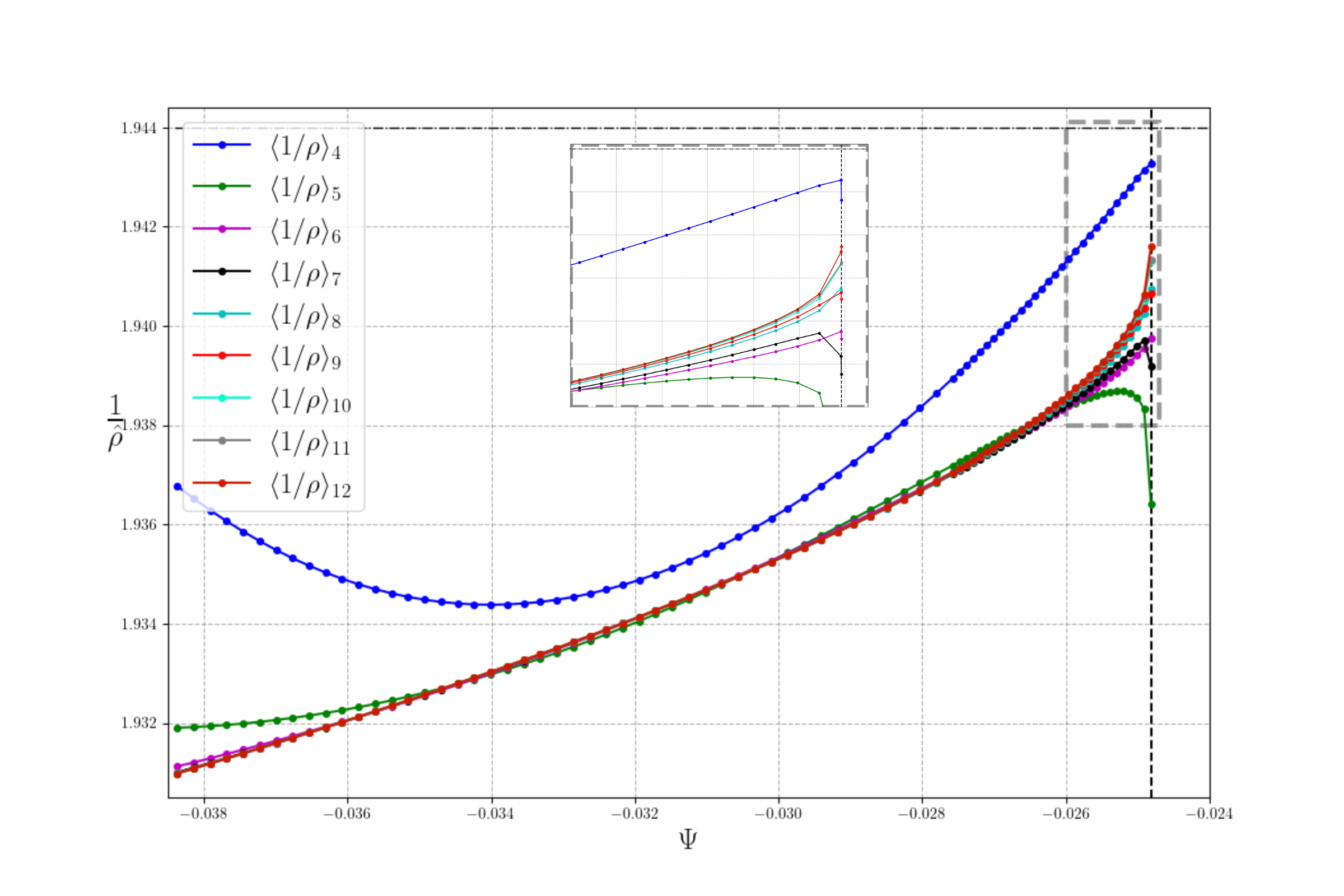}}\\
{\includegraphics[height=5cm, trim={10 0 30 0},clip]{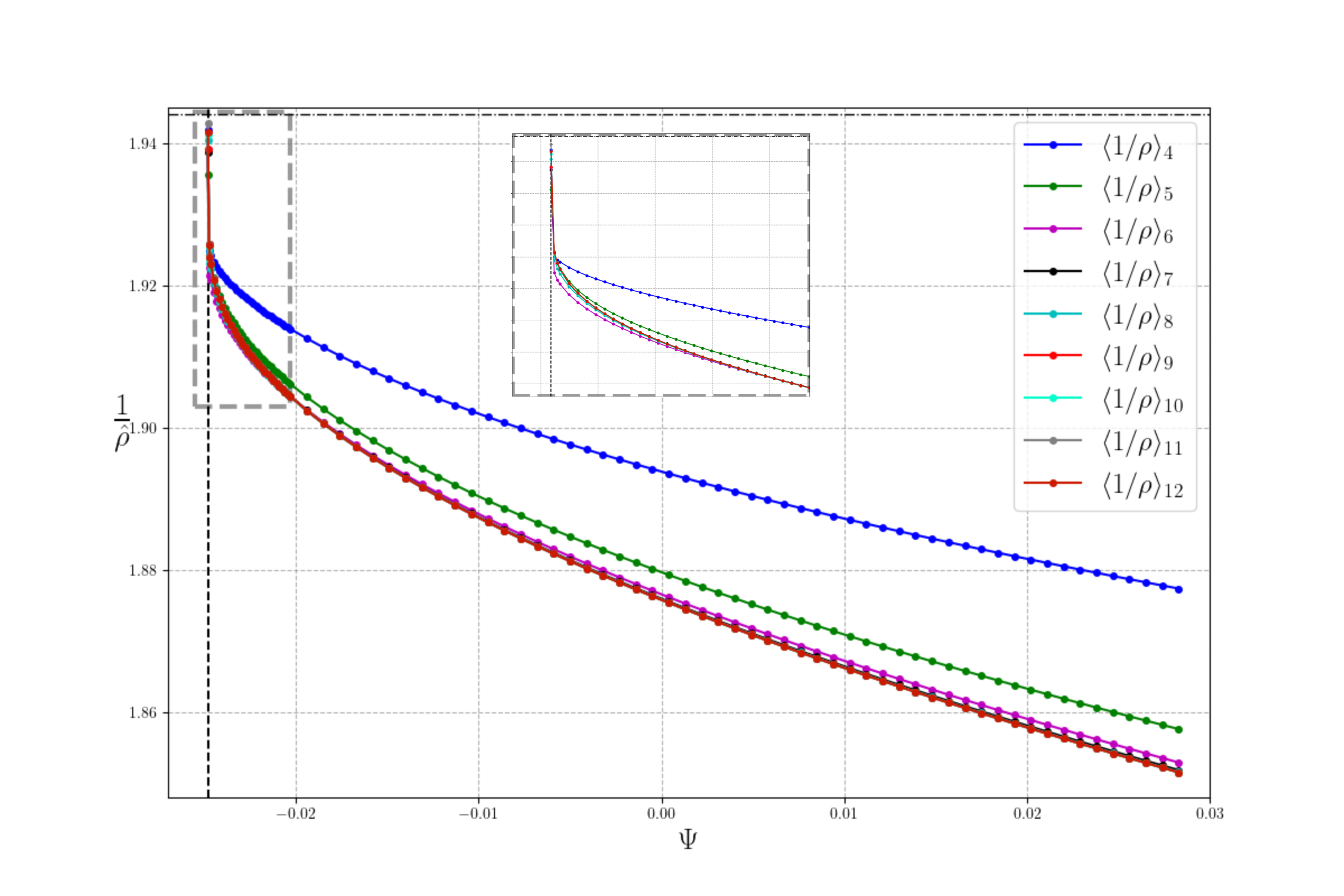}}
\caption{{Average} 
 $1/\hat{\rho}$ computed for subdivisions of sizes $q=4 - 12$: $\langle 1/\rho \rangle_q$, as~a function of $\Psi$ in the three regions:   (upper-left) inner, (upper-right) magnetic island,  and (bottom)
outer  regions.  The~vertical lines indicate the value of $\Psi$ for the separatrix, and~the horizontal dash--dot line indicates the numerical value of $\langle 1/\rho\rangle$ on the hyperbolic closed fieldline. Details of the plots near the separatrix are shown in dashed insets.
}
   \label{fig:avrho}

\end{figure}
%////////////////////////////

\vspace{-12pt}
%////////////////////////////
\begin{figure}[ht!]
\centering  
 \includegraphics[height=7.0cm, trim={20 0 50 0},clip]{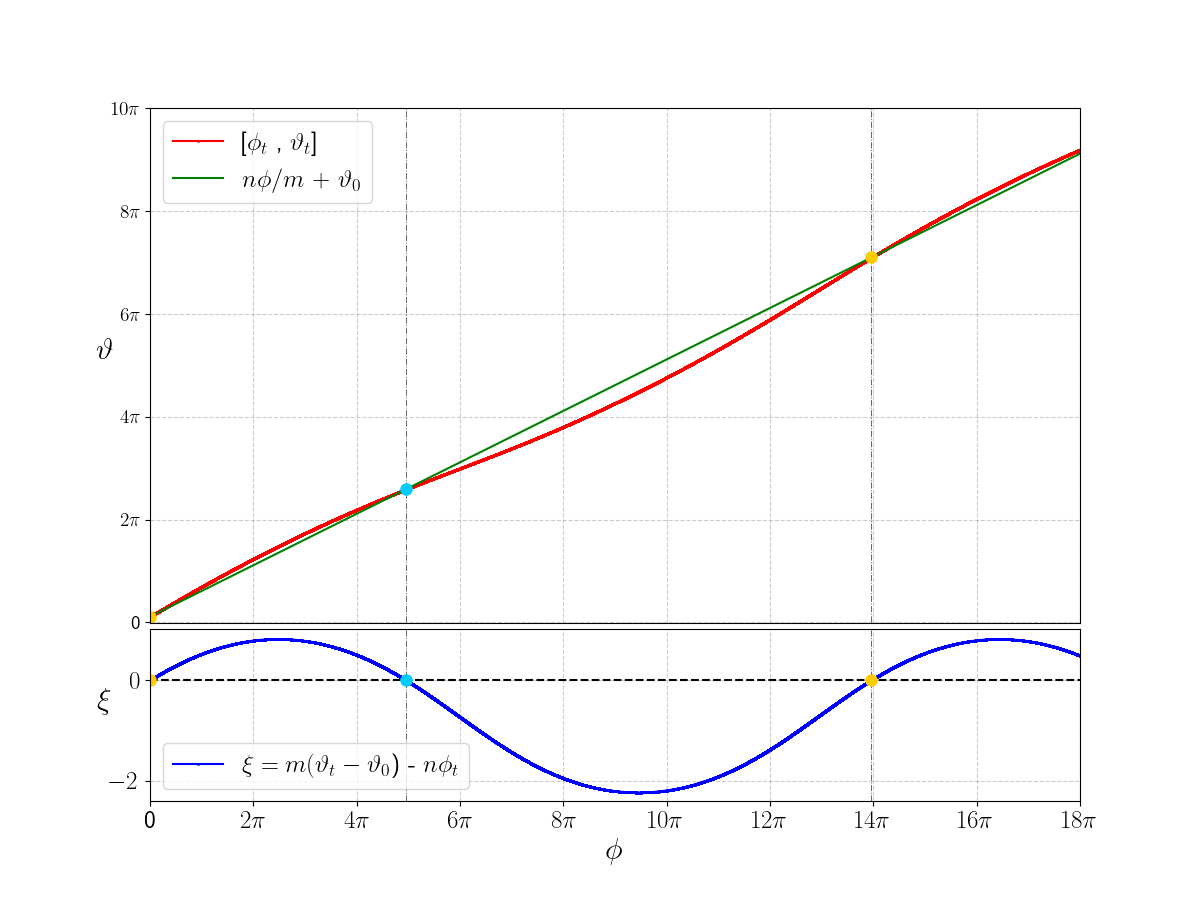} 
\caption{Plot
 in the $(\phi,\vartheta)$ plane of the orbit (red) and the $u$-line (green) starting at the point $(\tilde{y},\tilde{z}) = (0.570, 0.211)$ within the magnetic island. The~first intersection in this plot, between~the orbit and the $u$-line, 
    does not correspond to an actual crossing, but~rather to the orbit passing through a different point $(\psi_1,\vartheta_1,\phi_1)$ on the flux surface that satisfies $\vartheta_0 = \vartheta_1-n\phi_1/m$ but not $\psi_1= \psi_0$.}
   \label{T12_diagram_island}
\end{figure}
%////////////////////////////

%
Figure~\ref{avT_regions} shows the averages of the return time, $\langle T\rangle_N$, over~the first $N$ crossings as a function of $\Psi$, with~$N=1, 10, 20, 30$, in~the three regions: inner, island, and~outer. The~first plot uses $|\Psi|$ on the $x$-axis (rather than $\Psi$, which is negative there) to 
make it go from the magnetic axis on the left to the separatrix on the right. The~value of $\Psi$ corresponding to the separatrix is indicated by a vertical dashed line in all three subplots, coinciding with the divergence of $T$. 

%\vspace{-22pt}
%////////////////////////////
\begin{figure}[ht!]
 \centering  
{\includegraphics[height=5.5cm, trim={10 0 30 0},clip]{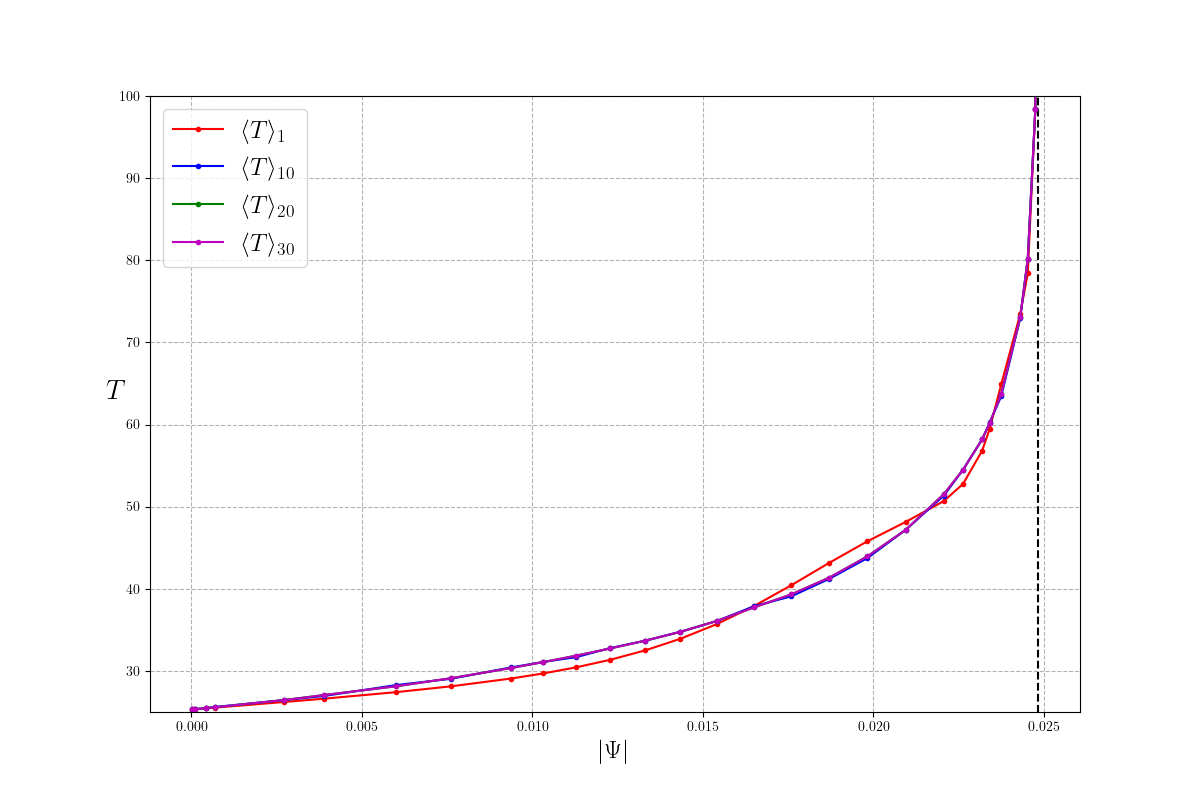}}
{\includegraphics[height=5.5cm, trim={10 0 30 0},clip]{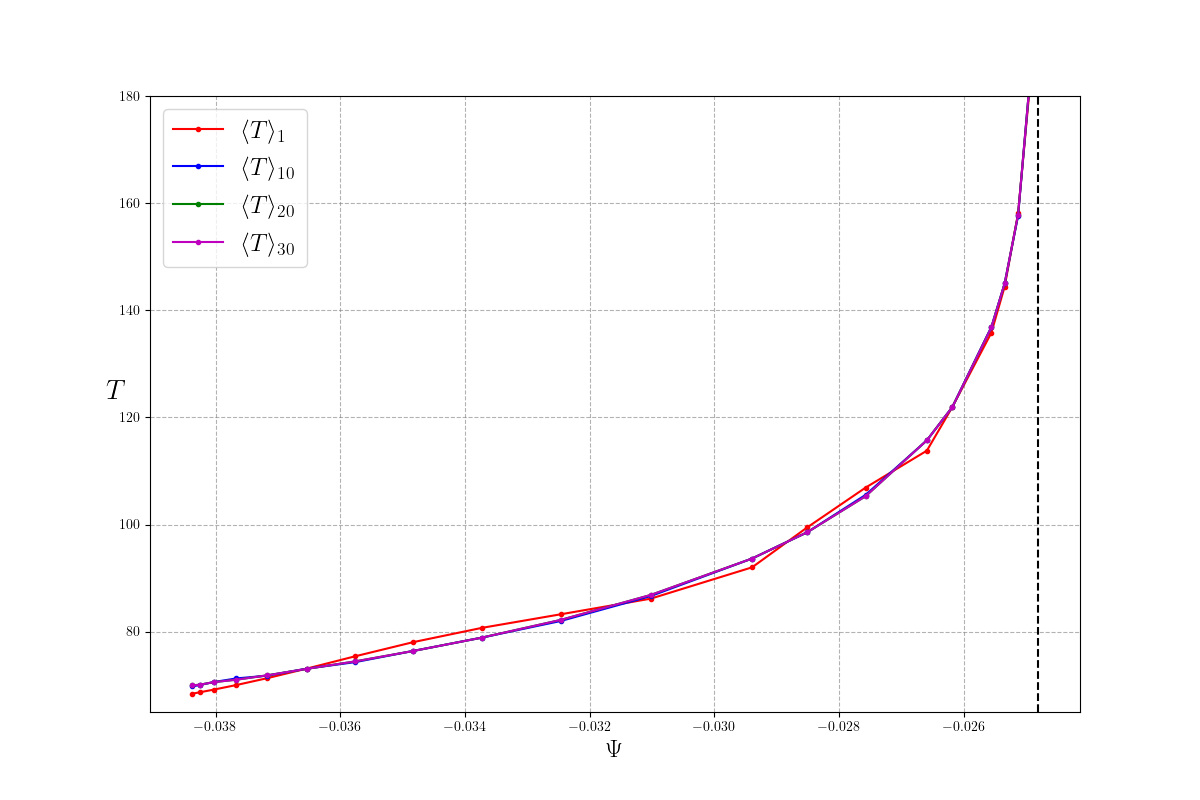}}\\
{\includegraphics[height=5.5cm, trim={10 0 30 0},clip]{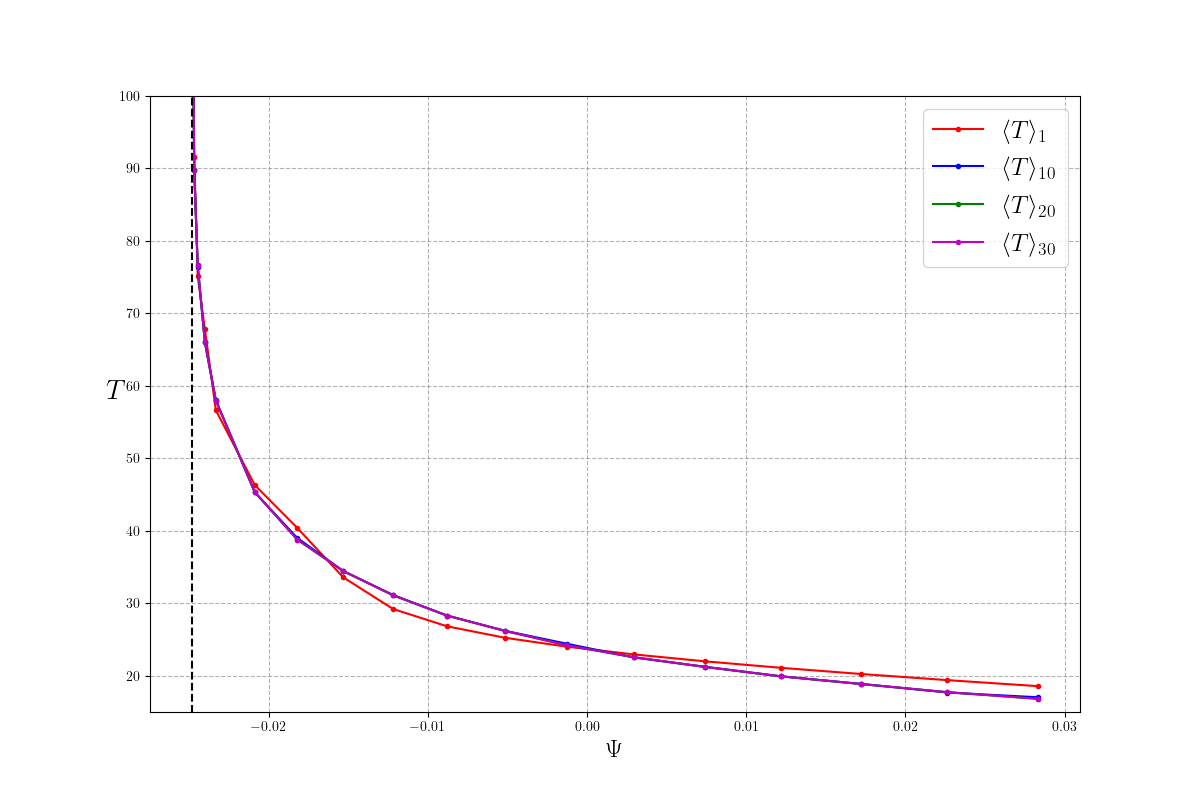}}
\caption{{Averages} 
of  the return time $\langle T\rangle_N$ of an unscaled field-line to a $u$-line as a function of $\Psi$ in the three regions:  inner (upper-left), magnetic island (upper-right) and outer (bottom) regions. The~vertical lines indicate the value of $\Psi$ for the separatrix.}
\label{avT_regions}
\end{figure}
%\unskip
%////////////////////////////

In the next subsections, we give a little more detail about how we determine the lattice generators and give the results for the enclosed volumes for~the three types of flux surface in this~example.
%<<<<<<<<<<<<<<<<<<>>>>>>>>>>>>>>>>>>>>>>>>>>>>>
\FloatBarrier                   %<<<<<<<<<<<<<<<<<<>>>>>>>>>>>>>>>>>>>>>>>>>>>>>

\subsubsection{Inner~Region}
For the inner region, a~typical computation of $T_2(\Psi)$ is displayed in Figure~\ref{Fig_T2_inner}.
From the theory, the~return time along $v$ to a given $u$-line is independent of the initial point on the $u$-line, but~we checked this numerically and confirmed it. Also, the~return time $T_2$  diverges like $- \log \delta \Psi$ as a separatrix is approached (compare the period of a pendulum), Figure~\ref{Fig_T2_log}.

%\vspace{-22pt}
%////////////////////////////
\begin{figure}[ht!]
 \centering  
{\includegraphics[height=6cm, trim={60 0 20 0},clip]{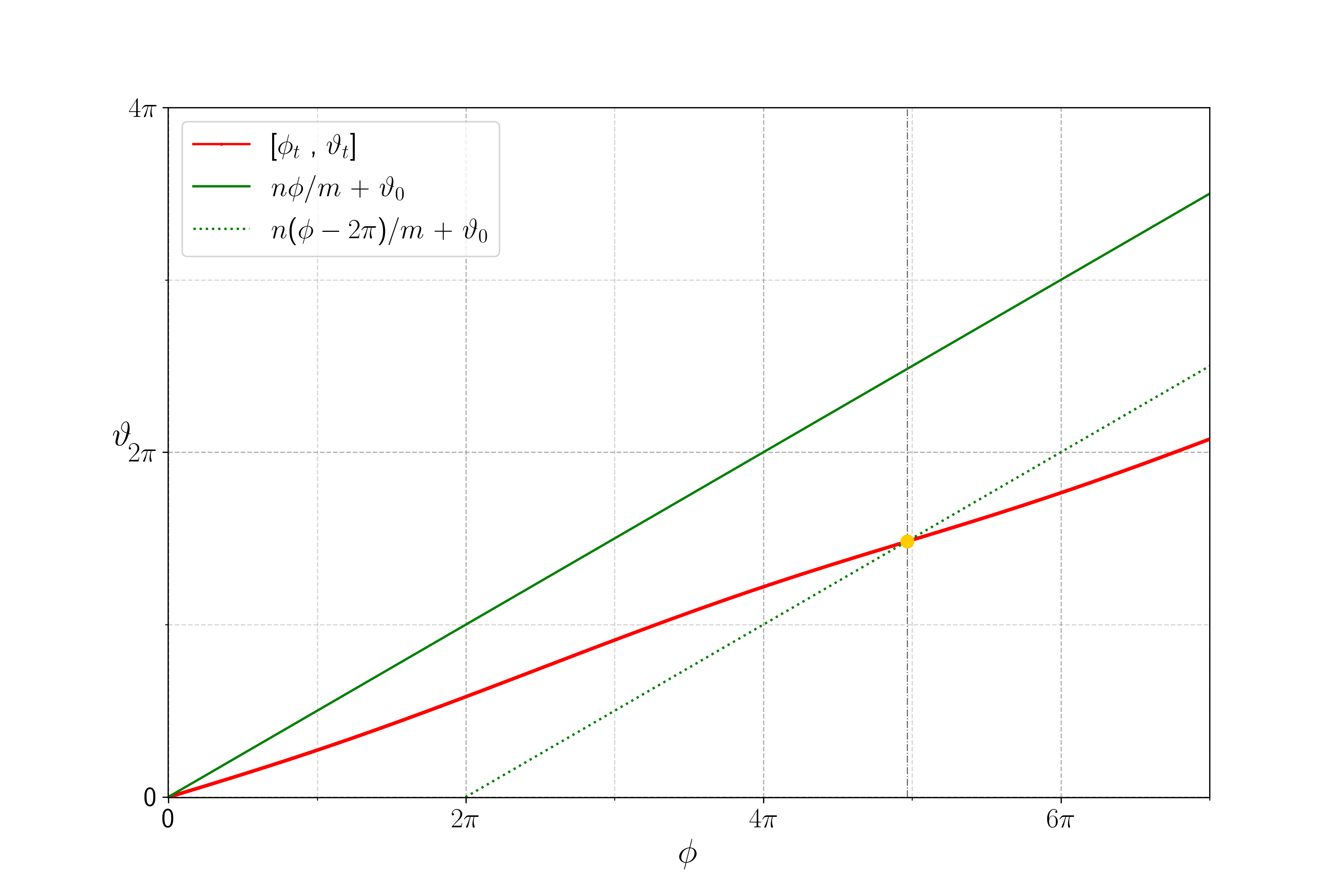}}
{\includegraphics[height=6cm, trim={10 0 70 0},clip]{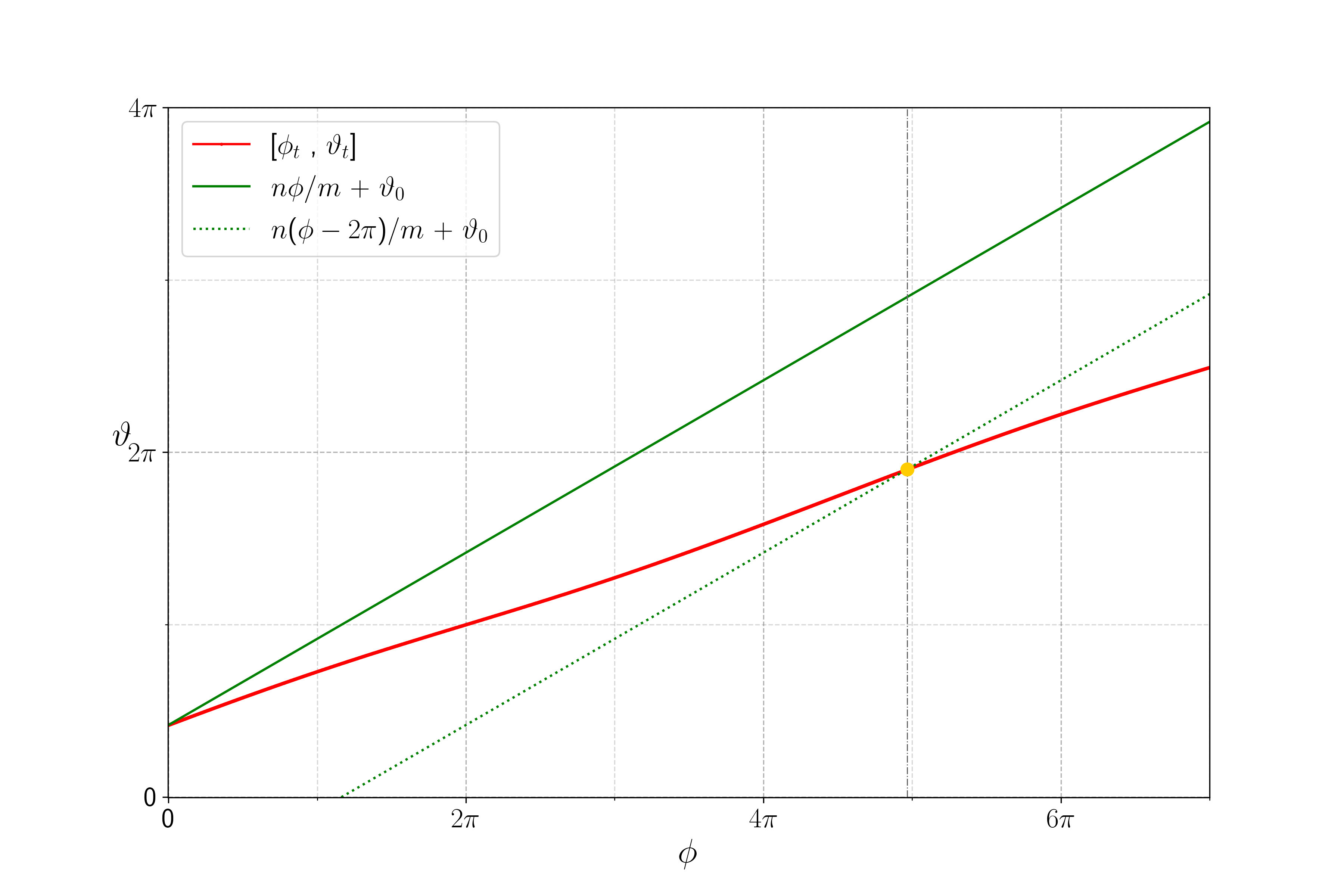}}  
\caption{
  Crossing 
 of the fieldline (red) and images of the $u$-line (green), starting at two selected initial points $(\tilde{y}_i, \tilde{z}_i)$ on the poloidal plane $\phi = 0$: (left) $(0.2,0)$ and (right) $(0.06,0.21)$ on the same flux surface ($\Psi=-0.01031$) in the inner region, for~$\varepsilon=0.007$. The~return time along $v$ is the same in both subplots $T(\Psi)=15.60$.}
   \label{Fig_T2_inner}
\end{figure}
\unskip
%\vspace{-12pt}
%////////////////////////////

%////////////////////////////
\begin{figure}[ht!]
\centering  
\includegraphics[height=8cm, trim={0 -20 0 0},clip]{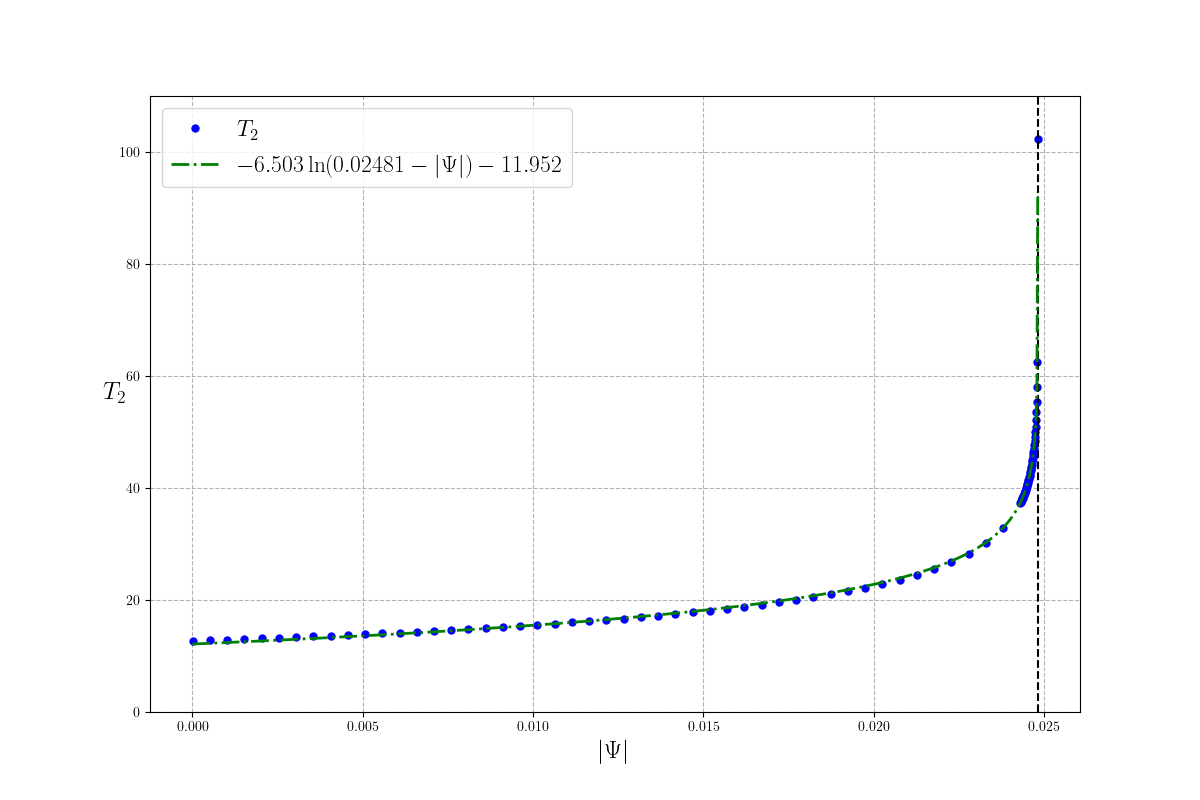}
\caption{
{Return} 
 time $T_2$ along $v$, computed in the inner region  (blue), and~the  fitted logarithmic curve $f(\Psi) = -A \ln(S-|\Psi|) + B$ (green dash-dot line), with~$S = 0.0248157$ corresponding to the separatrix.}
   \label{Fig_T2_log}
\end{figure}
%////////////////////////////

Figure~\ref{Fig_V_inner} shows the volume computed by (\ref{V_general}) and by Theorem \ref{thm3} for~the inner region as a function of $\Psi$ defined in (\ref{Psi_hellical}).
We expected to see a more evident $\delta\Psi \log \delta\Psi$ behaviour near the separatrix but it requires zooming in (not shown).

%////////////////////////////
\begin{figure}[ht!]
 \centering  
{\includegraphics[height=8.0cm, trim={20 0 10 0},clip]{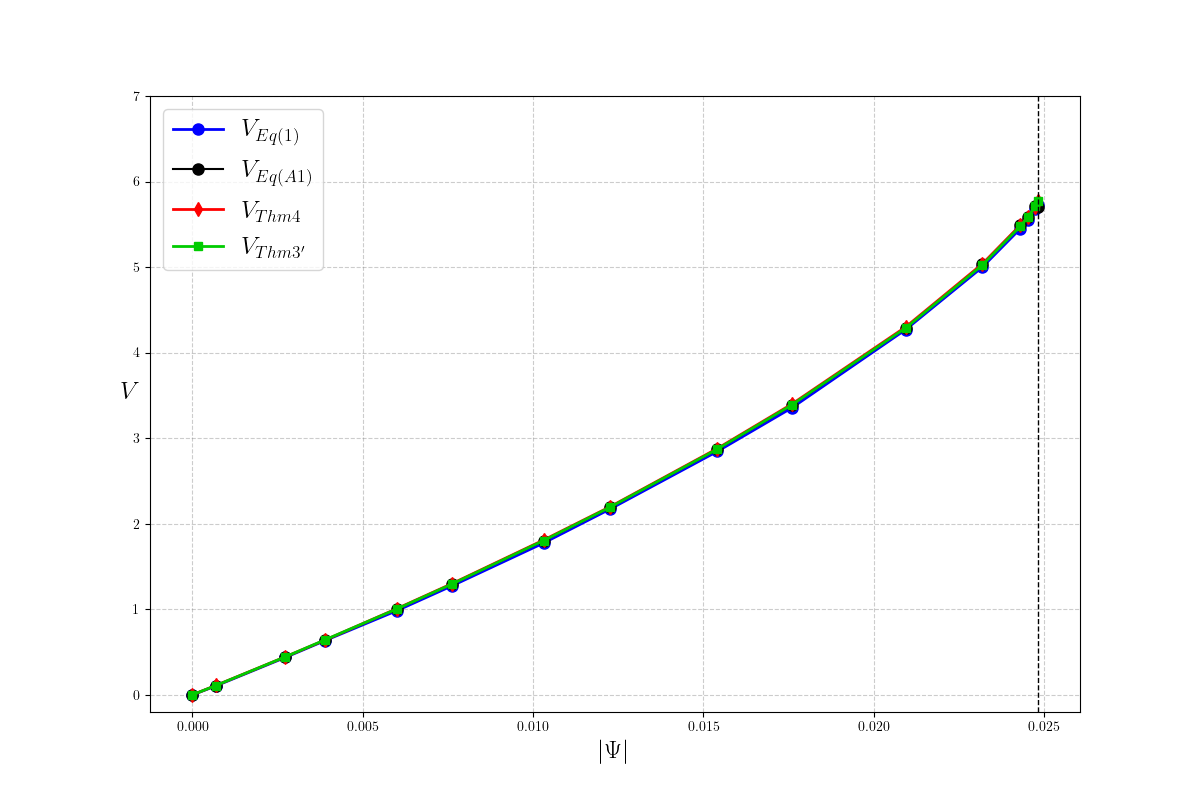}}
{\includegraphics[height=8.0cm, trim={0 0 0 0},clip]{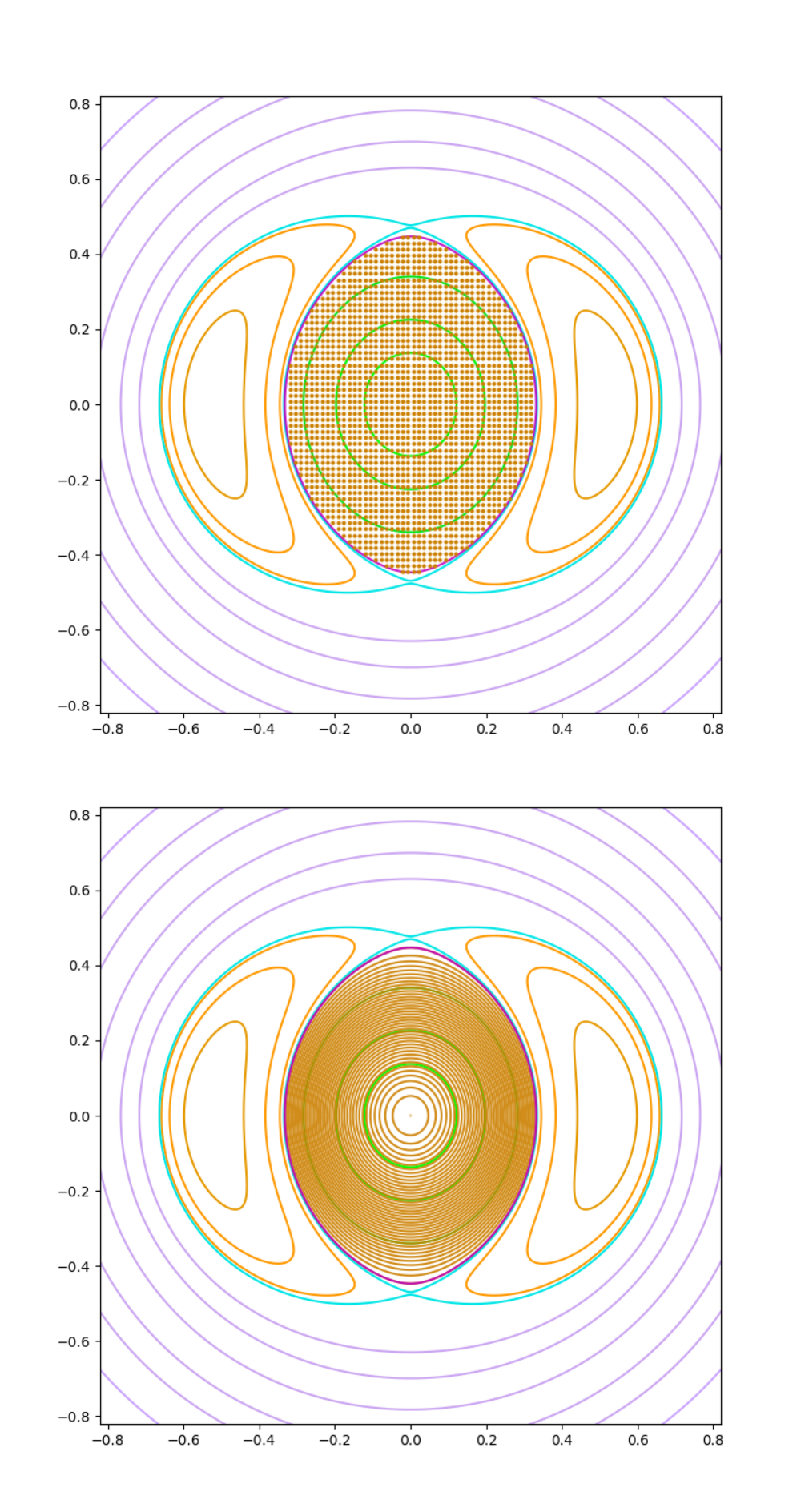}}
   \caption{(Left) {Volume} 
 computed by (\ref{V_general}) (blue), (\ref{eq:vol_lambda}) (black), Theorem \ref{thm4} (red) and Theorem \ref{thm3'} (green) as a function of $|\Psi|$ (to make the slope positive) for the field (\ref{A_ex2}) in the inner region.  The vertical dashed line indicate the value of $\Psi$ for the separatrix. (Right) Example of the grid ({top}) and the set of flux surfaces ({bottom}), corresponding to a uniform partition in~$\Psi$, used to compute (\ref{V_general}) and Theorems~\ref{thm3'} and \ref{thm4}, respectively. 
   }
   \label{Fig_V_inner}
\end{figure}
\unskip
%////////////////////////////
%<<<<<<<<<<<<<<<<<<>>>>>>>>>>>>>>>>>>>>>>>>>>>>>
\FloatBarrier                   %<<<<<<<<<<<<<<<<<<>>>>>>>>>>>>>>>>>>>>>>>>>>>>>

\subsubsection{Magnetic~Island}
 A typical computation of $T_2(\Psi)$ for the magnetic island region is displayed in Figure~\ref{Fig_T2_isl}. As~mentioned earlier in this section, $T_2$ is recorded for the second intersection between the fieldline and the $u$-line on the $(\phi,\vartheta)$-plane.

%\vspace{-22pt}
%////////////////////////////
\begin{figure}[ht!]
 \centering  
{\includegraphics[height=6.3cm, trim={60 0 20 0},clip]{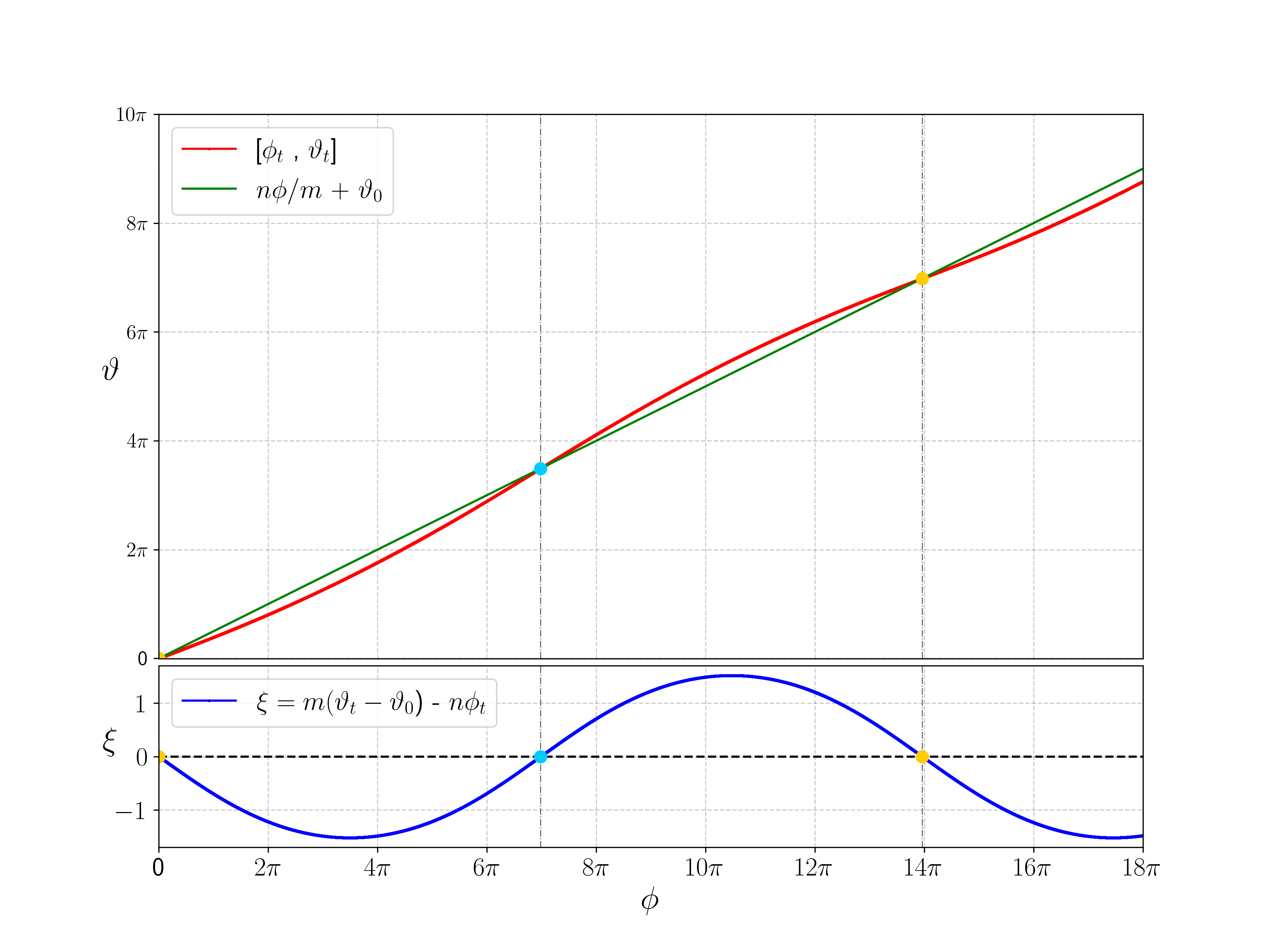}}
{\includegraphics[height=6.3cm, trim={10 10 70 0},clip]{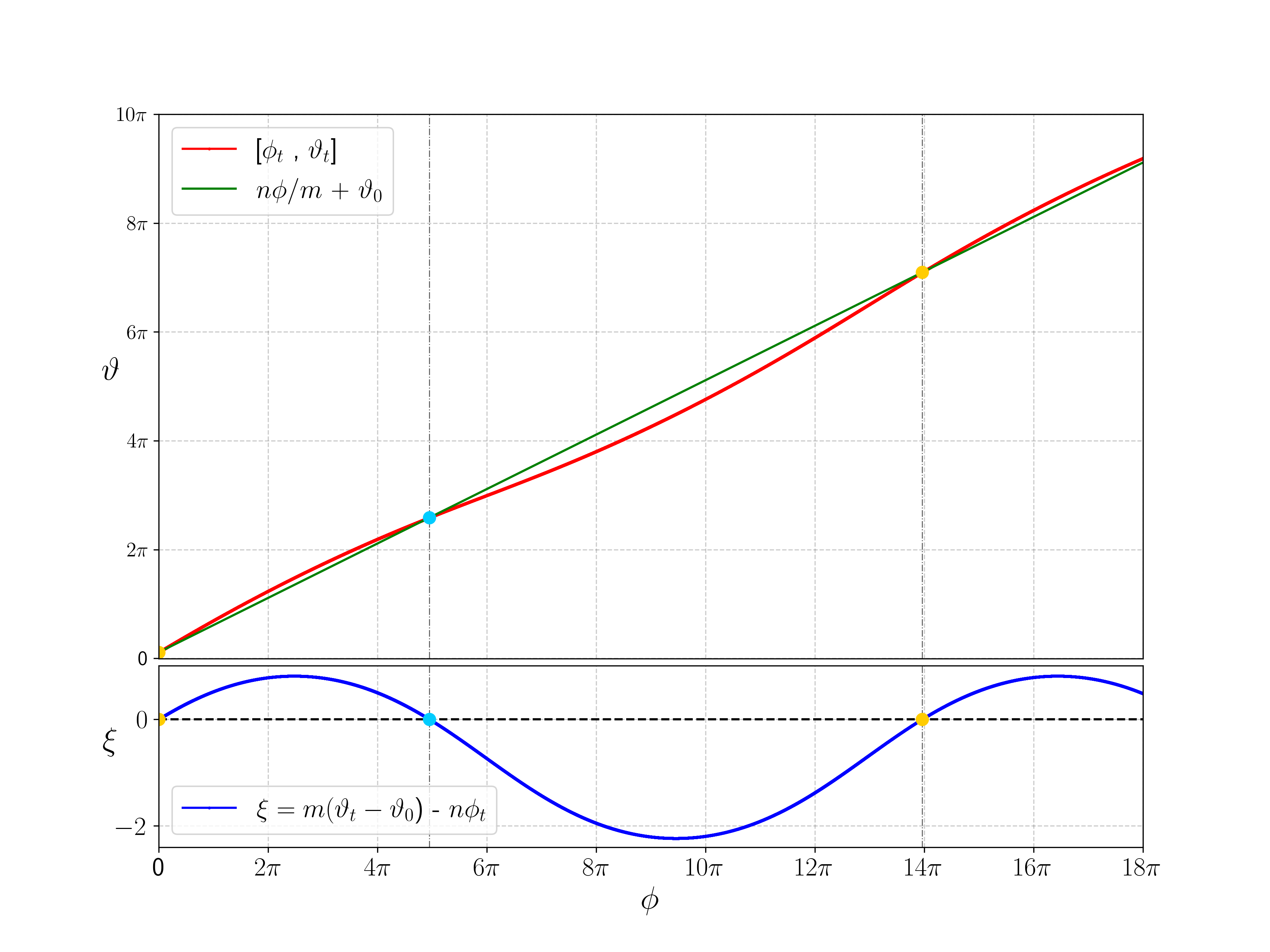}} 
\caption{Crossing 
 of the fieldline (red) and $u$-line (green) starting at two selected initial points $(\tilde{y}_i, \tilde{z}_i)$ on the poloidal plane $\phi=0$: $(0.4,0)$ and $(0.570,0.211)$  on the same flux surface ($\Psi=-0.0316$) and same component, in~the magnetic island region, for~$\varepsilon=0.007$. The~second crossing time under $v$ is the same in both subplots, $T(\Psi) = 43.86$, while the first (spurious crossing time) differs: 21.93 [Left] and 15.53 [Right].
 The blue graph in the bottom subplots corresponds to $\xi = m(\vartheta_t-\vartheta_0) - n\phi_t$, to~more easily identify the crossings between the orbit and the $u$-line.}
   \label{Fig_T2_isl}

\end{figure}
%////////////////////////////

Figure~\ref{Fig_V_isl} shows the volume computed by (\ref{V_general}) and Theorem \ref{thm3} for flux surfaces in the magnetic island as a function of $\Psi$ defined in (\ref{Psi_hellical}).

%////////////////////////////
\begin{figure}[ht!]
 \centering  
{\includegraphics[height=8.0cm, trim={20 0 10 0},clip]{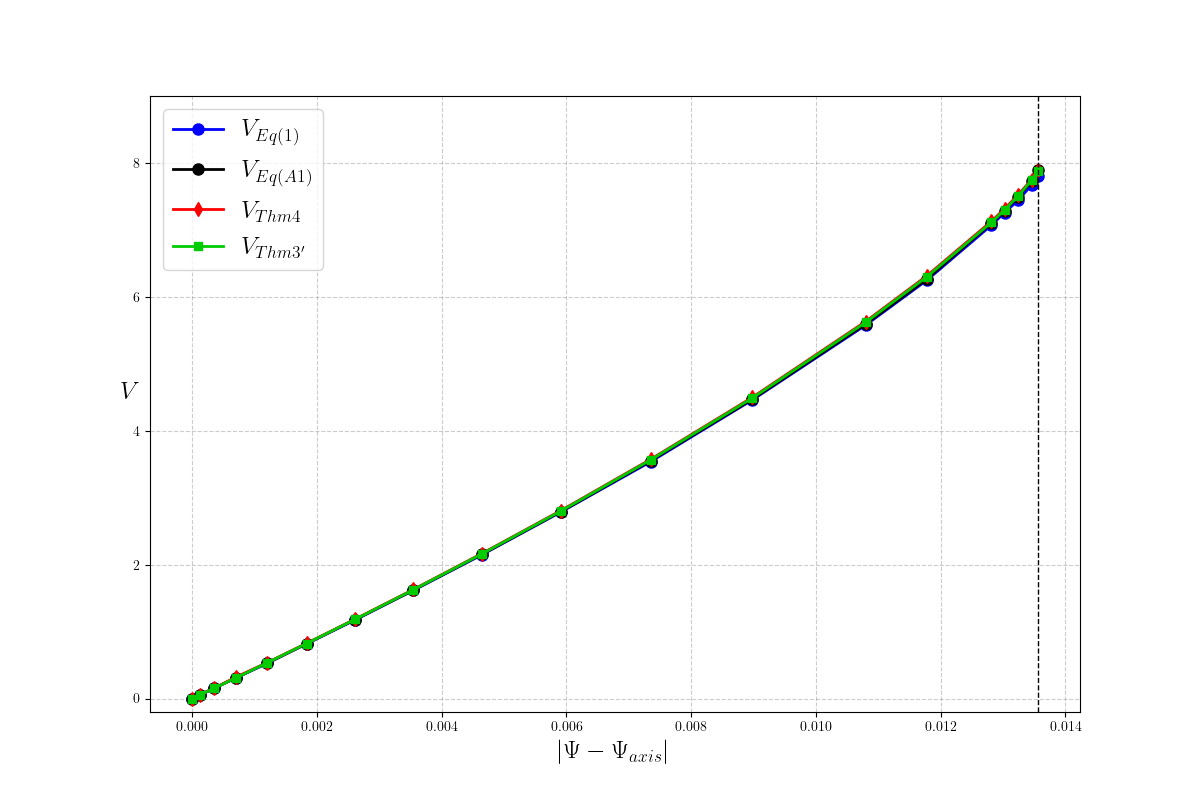}}
{\includegraphics[height=8.0cm, trim={0 0 0 0},clip]{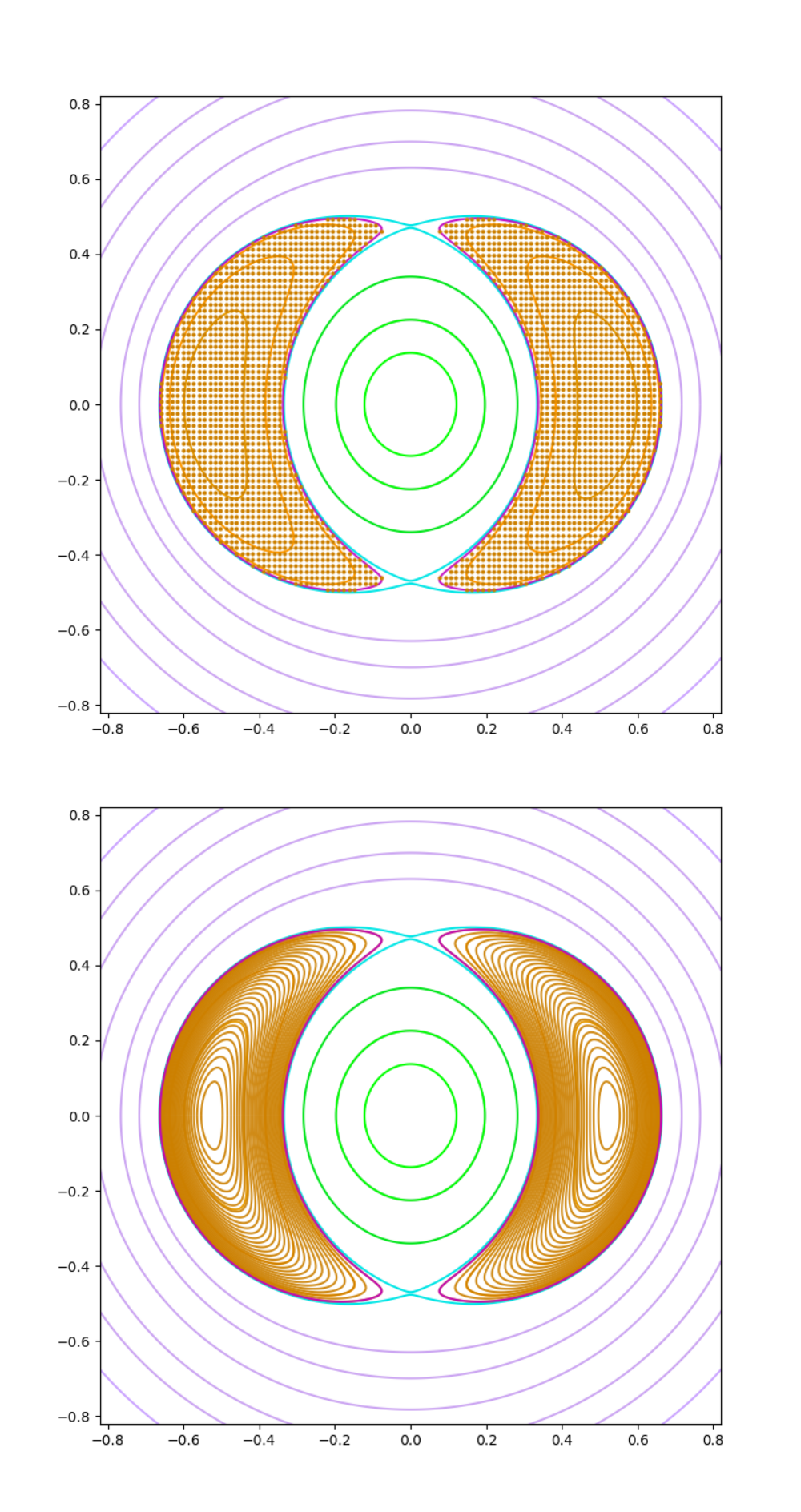}}
   \caption{[Left] {Volume} 
 computed by (\ref{V_general}) (blue), (\ref{eq:vol_lambda}) (black), Theorem \ref{thm4} (red) and Theorem \ref{thm3'} (green) as a function of $\Psi$ for the field (\ref{A_ex2}) in the magnetic island. The vertical dashed line indicate the value of $\Psi$ for the separatrix. 
 [right] Example of the grid ({top}) and the set of flux surfaces ({bottom}), corresponding to a uniform partition in~$\Psi$, used to compute (\ref{V_general}), and~Theorems~\ref{thm3'} and \ref{thm4} and (\ref{eq:vol_lambda}), respectively. 
   }
   \label{Fig_V_isl}

\end{figure}
\unskip
%////////////////////////////
%<<<<<<<<<<<<<<<<<<>>>>>>>>>>>>>>>>>>>>>>>>>>>>>
\FloatBarrier                   %<<<<<<<<<<<<<<<<<<>>>>>>>>>>>>>>>>>>>>>>>>>>>>>

\subsubsection{Outer~Region}
 A typical computation of $T_2(\Psi)$ for the outer region is displayed in Figure~\ref{Fig_T2_outer}.

%\vspace{-22pt}
%////////////////////////////
\begin{figure}[ht!]
 \centering  
{\includegraphics[height=6cm, trim={60 0 20 0},clip]{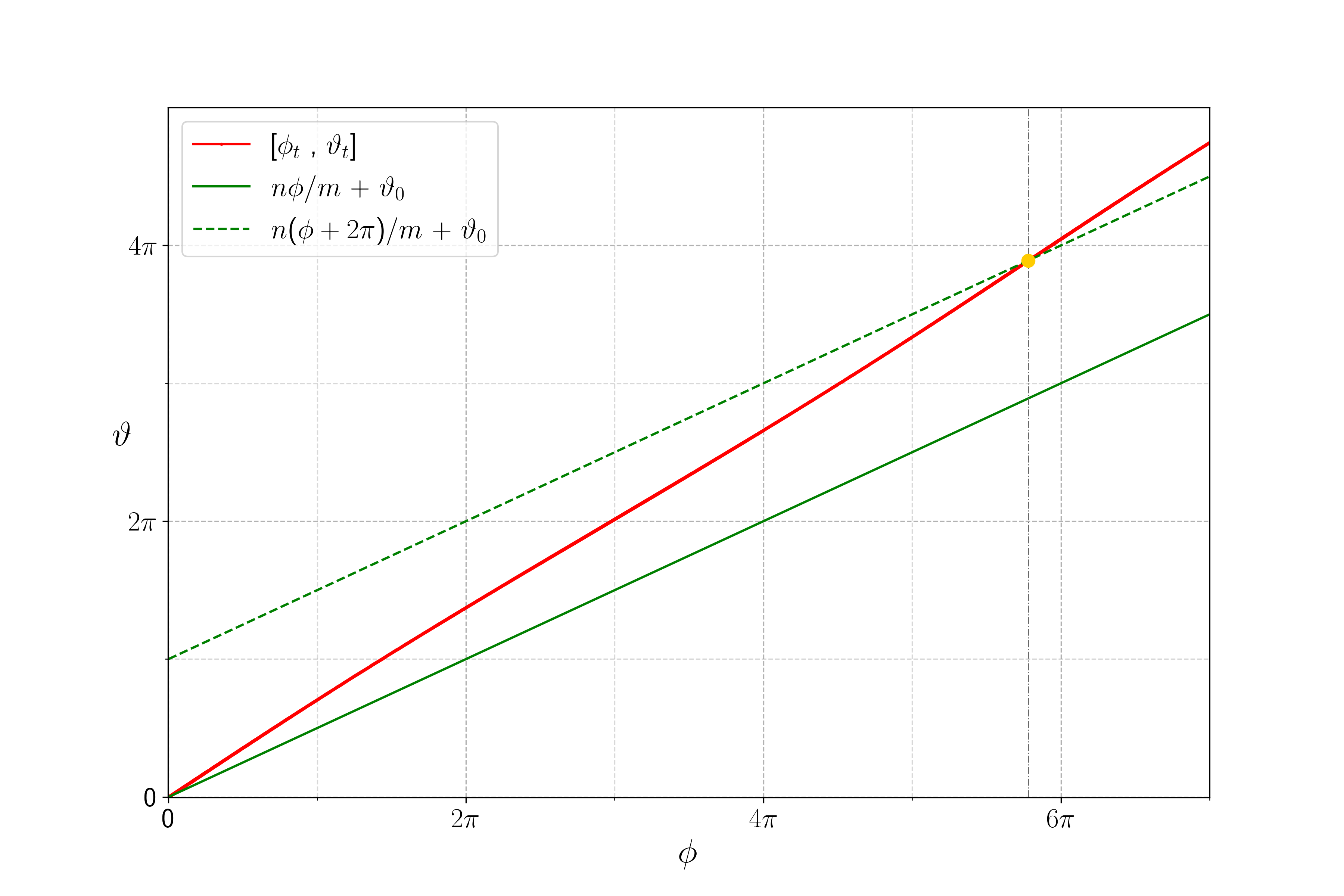}}
{\includegraphics[height=6cm, trim={10 0 70 0},clip]{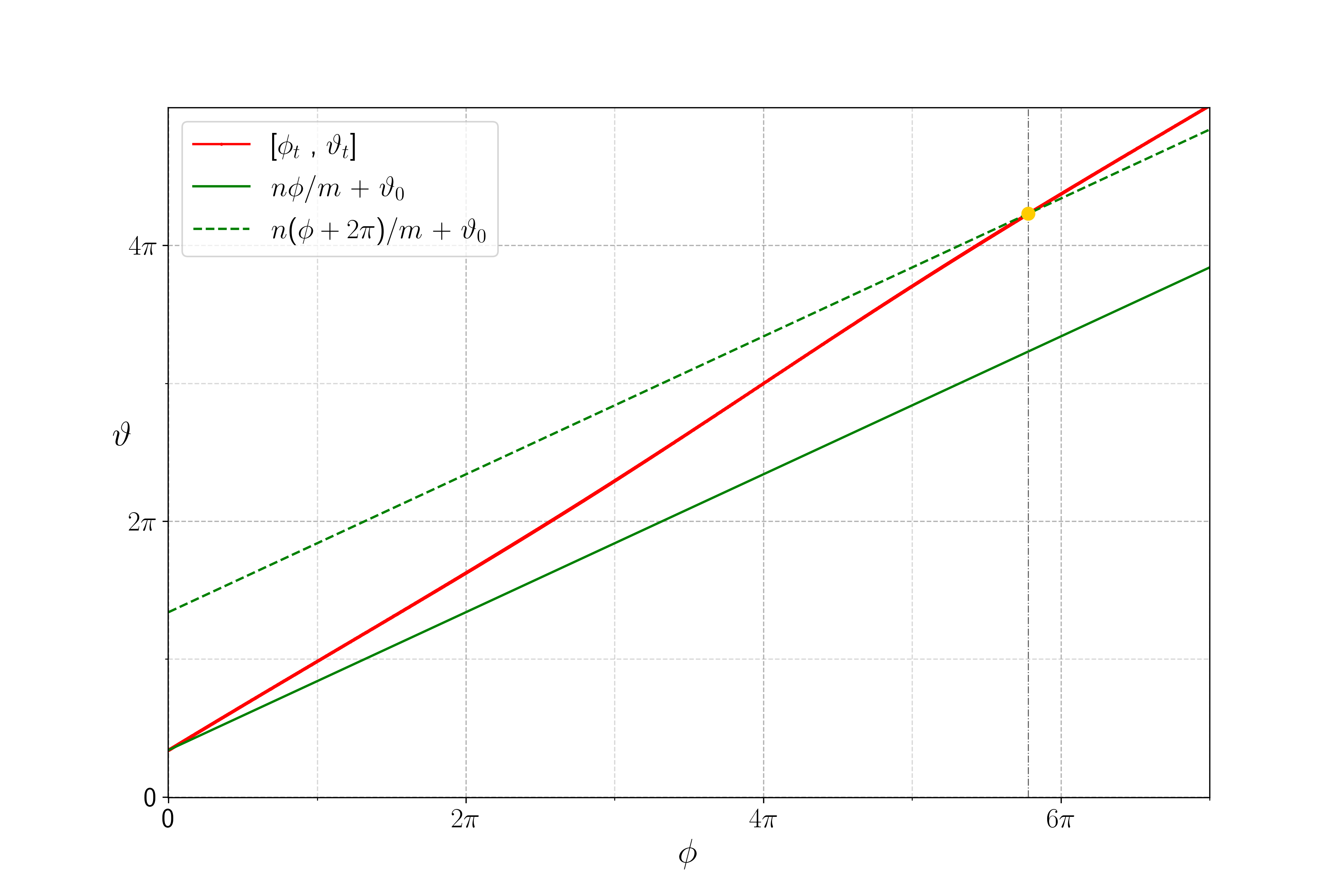}}
\caption{
{Crossing} 
 of the fieldline (red) and images of the $u$-line (green), starting at two selected initial points $(\tilde{y}_i, \tilde{z}_i)$ on the poloidal plane $\phi = 0$: [left] $(0.7, 0)$ and [right] $(0.41, 0.54)$, on~the same flux surface ($\Psi = -0.01533$) in the outer region, for~$\varepsilon = 0.007$.
 The return time is the same in both subplots $T(\Psi)=18.15$.}
   \label{Fig_T2_outer}

\end{figure}
%////////////////////////////

Figure~\ref{Fig_V_outer} shows the volume computed by (\ref{V_general}) and Theorem \ref{thm3} for the inner region as a function of $\Psi$ defined in (\ref{Psi_hellical}).
%////////////////////////////
\begin{figure}[ht!]
 \centering  
{\includegraphics[height=8.0cm, trim={20 0 10 0},clip]{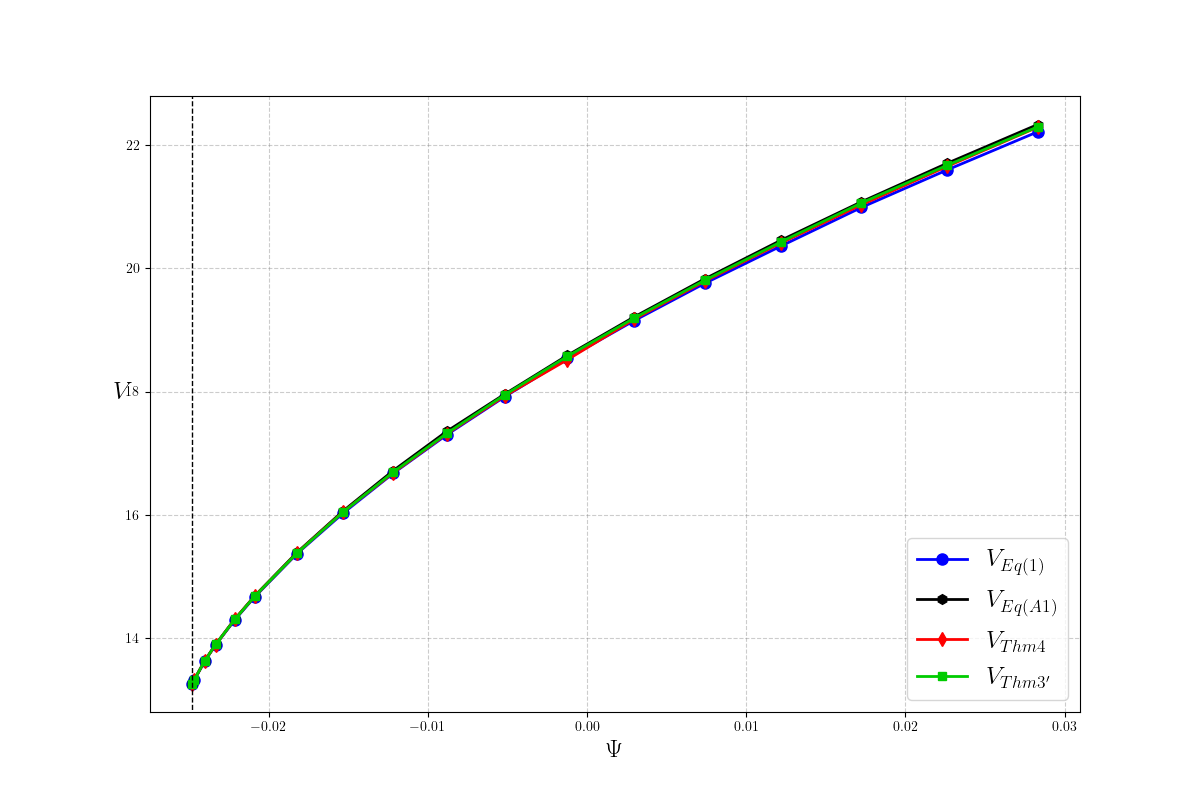}}
{\includegraphics[height=8.0cm, trim={0 0 0 0},clip]{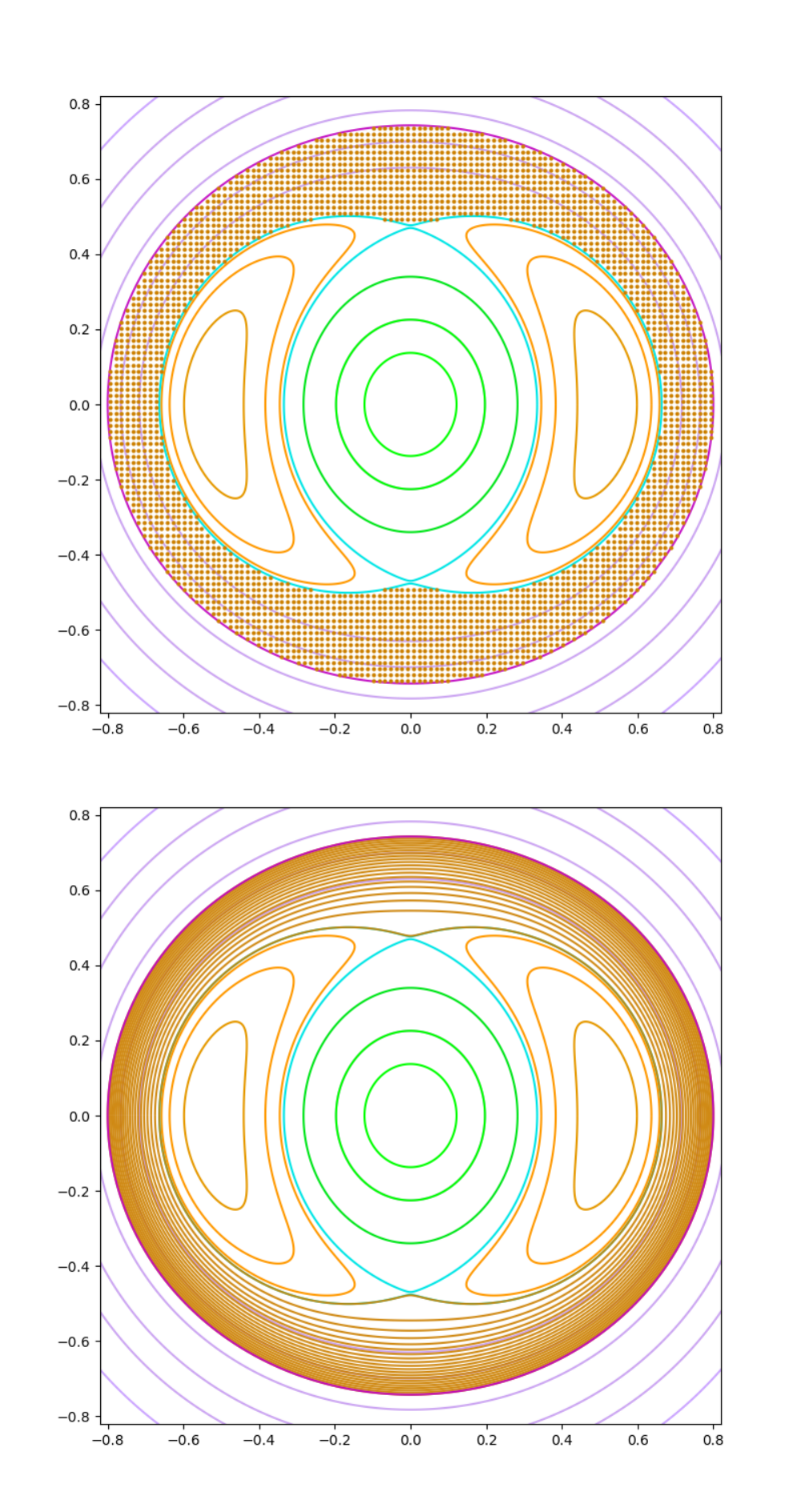}}
   \caption{[Left] {Volume} %
 computed by (\ref{V_general}) (blue), (\ref{eq:vol_lambda}) (black), Theorem \ref{thm4} (red) and Theorem \ref{thm3'} (green) as a function of $\Psi$ for the field (\ref{A_ex2}) in the outer region. The~volumes enclosed by the island and inner regions has been added in. The vertical dashed line indicate the value of $\Psi$ for the separatrix. 
 [Right] Example of the grid ({top}) and the set of flux surfaces ({bottom}), corresponding to a uniform partition in~$\Psi$, used to compute (\ref{V_general}) and Theorems~\ref{thm3'} and \ref{thm4}, respectively.}
   \label{Fig_V_outer}

\end{figure}
%////////////////////////////

%%%%%%%%%%%%%%%%%%%%%%%%%%%%%%%%%%%%%%%%%%
\section{Discussion}
\label{sec:discussion}

In this section, we discuss the performance of our implementations of the various methods of computation used in these examples.  There are two aspects:~computation time and accuracy of~the result.

For the axisymmetric field, the~volume computations of (\ref{V_general}) and Theorem \ref{thm1} %for the axisymmetric field
were straightforward and the only parameter values to vary to improve the accuracy of the volume enclosed between a given $\Psi_0$ and $\Psi_1$, were the grid size for each method, as~the integration times required to compute the respective return times are %, $\mathsf{T}$ and $T$, were
of the same order.  
In general, we found that Theorem \ref{thm1} yields higher accuracy in  only a fraction of the runtime of (\ref{V_general}) {or (\ref{eq:vol_lambda})}, as~illustrated in Table~\ref{Ex1_table}, which reports average runtimes for four values of $\Psi$ from Figure~\ref{Fig_V_axissym}, with~grids chosen to achieve approximately 1\% relative error for (\ref{V_general}) and a coarse grid with $N = 20$ for Theorem \ref{thm1}. Our implementation of (\ref{eq:vol_lambda}) was consistently faster and more accurate than (\ref{V_general}), but~not as efficient as Theorem \ref{thm1}.
The average runtimes are given in seconds and correspond to single-core computations on an AMD\textsuperscript{\textregistered} Ryzen~5~3500U (2.1~GHz).

\vspace{0.2cm}
\begin{table}[h!]
\centering
\begin{tabular}{cc cc cc c c c}
\specialrule{.12em}{0em}{0em}
  {$R_1$} & {$\Psi$}  & {$V_e$} 
  & Method & {$N_1$}& {$N_2$} & {$V_0$}
    &  {$\langle t_0\rangle$}\textbf{(s)} & {$\Delta/V_e$}
    \\
\specialrule{.12em}{0em}{0em}
 % 1.2 ~ ~
 \multirow{4}{*}{$1.2$} & \multirow{4}{*}{0.020}   
 & \multirow{4}{*}{0.789568}   
       & Eq~(\ref{V_general}) & 300 &300  & 0.782214 & 67 & 0.009 
       \\ \cline{4-9}
    &&& \cellcolor{grey3}Eq~(\ref{eq:vol_lambda}) &   \cellcolor{grey3}50 & \cellcolor{grey3}50
      & \cellcolor{grey3}0.787261  &   \cellcolor{grey3}56 
      &  \cellcolor{grey3}0.003 \\ \cline{4-9}
   &&& \cellcolor{grey2}Thm \ref{thm1} & \cellcolor{grey2}20  & \cellcolor{grey2}---
      &  \cellcolor{grey2}0.789548 & \cellcolor{grey2}5   
      & \cellcolor{grey2}$2.6\times 10^{-5}$ \\
\specialrule{.10em}{-.1em}{0em}
 % 1.4 ~ ~
 \multirow{4}{*}{$1.4$} & \multirow{4}{*}{0.080}   
 & \multirow{4}{*}{3.158273}   
       & Eq~(\ref{V_general}) & 200 & 200 & 3.123140 & 107 & 0.011 \\ \cline{4-9}
    &&& \cellcolor{grey3}Eq~(\ref{eq:vol_lambda}) &   \cellcolor{grey3}50& \cellcolor{grey3}50
      &  \cellcolor{grey3}3.149079 &   \cellcolor{grey3}57
      &  \cellcolor{grey3}0.003 \\ \cline{4-9}
    &&& \cellcolor{grey2}Thm \ref{thm1} & \cellcolor{grey2}20  & \cellcolor{grey2}---
      &  \cellcolor{grey2}3.158248 & \cellcolor{grey2}5   
      & \cellcolor{grey2}$7.9\times 10^{-6}$  
    \\ \specialrule{.10em}{-.1em}{0em}
 % 1.6 ~ ~
 \multirow{4}{*}{$1.6$} & \multirow{4}{*}{0.180}   
 & \multirow{4}{*}{7.106115}   
       & Eq~(\ref{V_general}) & 200 & 200 & 7.016993& 240 & 0.012 
    \\ \cline{4-9}
    &&& \cellcolor{grey3}Eq~(\ref{eq:vol_lambda}) &   \cellcolor{grey3}50 & \cellcolor{grey3}50
      & \cellcolor{grey3}7.085340  &   \cellcolor{grey3}57 
      & \cellcolor{grey3}0.003 \\ \cline{4-9}
    &&& \cellcolor{grey2}Thm \ref{thm1} & \cellcolor{grey2}20  & \cellcolor{grey2}---
      &  \cellcolor{grey2}7.106088 & \cellcolor{grey2}4   
      & \cellcolor{grey2}$3.8\times 10^{-6}$  
    \\ \specialrule{.10em}{-.1em}{0em}
 % 1.8 ~ ~
 \multirow{4}{*}{$1.8$} & \multirow{4}{*}{0.320}   
 & \multirow{4}{*}{12.633094}   
       & Eq~(\ref{V_general}) & 200 & 200  & 12.488658 & 403 & 0.011 
    \\ \cline{4-9}
    &&& \cellcolor{grey3}Eq~(\ref{eq:vol_lambda}) &   \cellcolor{grey3}50 & \cellcolor{grey3}50
      & \cellcolor{grey3}12.595751  &   \cellcolor{grey3}58
      & \cellcolor{grey3}0.003 \\ \cline{4-9}
    &&& \cellcolor{grey2}Thm \ref{thm1} & \cellcolor{grey2}20  & \cellcolor{grey2}---
      &  \cellcolor{grey2}12.633059 & \cellcolor{grey2}4  
      & \cellcolor{grey2}$2.7\times 10^{-6}$ \\  
\specialrule{.12em}{-.1em}{0em}
\end{tabular}
\caption{Comparison of the volume estimation $V_0$, enclosed by the flux surfaces passing through $[1,0]$ ($\Psi_0 = 0$) and $[R_1,0]$ ($\Psi_1$), obtained using (\ref{V_general}) and Theorem \ref{thm1}, with~respect to grid size and average run time. The~grid dimensions are $N_1 \times N_2$ for (\ref{V_general}), $N_g = N_2$ for the discretised level sets in (\ref{eq:vol_lambda}), and~$N_{\Psi} = N_1$ for integration in $\Psi$ in (\ref{eq:vol_lambda}) and Theorems~\ref{thm3'} and~\ref{thm4}. The~rectangular domain for~(\ref{V_general}) is $[0.1,1.9]\times[-0.9,0.9]$. \label{Ex1_table}}
\end{table}

For the toroidal helical field, the~computations of (\ref{V_general}) and Theorems \ref{thm3'} and \ref{thm4} yielded comparable results but with significantly different runtimes.
Accuracy was controlled by the grid sizes: $N \times N$ points in the poloidal section for (\ref{V_general}), $N_g$ points to discretise a level set and $N$ on $\Psi$ for (\ref{eq:vol_lambda}), and~ $N$ points in $\Psi$ for Theorems \ref{thm3'} and \ref{thm4}.  We quantified accuracy by the relative error compared to a reference value $V^*$.
The integration times for (\ref{V_general}) and Theorem \ref{thm3'}  were of the same order, whereas that of Theorem \ref{thm4} was required to be at least one order of magnitude greater to estimate the average return time. 
As (\ref{V_general}) is equivalent to a three-dimensional integration, whereas (\ref{eq:vol_lambda}) and Theorems~\ref{thm3'} and~\ref{thm4} are equivalent to a bit more than two dimensions, one would expect the codes based on those theorems or (\ref{eq:vol_lambda}) to run faster than that for (\ref{V_general}). In~practice, however, for~the small volumes between close flux surfaces ($|\Psi_1-\Psi_2|< \epsilon$), our use of Theorem~\ref{thm4} can be slower than (\ref{V_general}), and~(\ref{eq:vol_lambda}) is the slowest of the four. There is a tradeoff between accuracy and computation time between our implementations of (\ref{V_general}) and (\ref{eq:vol_lambda}), with~the first being faster in small volumes and the second always more accurate, with a consistent computation time. 
Our implementation of (\ref{V_general}) computes $\mathsf{T}$ only for the points in the grid contained between the level sets of $\Psi_1$ and $\Psi_2$, while that of (\ref{eq:vol_lambda}) computes $\mathsf{T}$ for every point in the discretised contour for $N$ contours in the partition of $\Psi$. 
Table~\ref{Ex2_table} illustrates performance by reporting average runtimes for six $[\Psi_1,\Psi_2]$ intervals in the inner, island, and~outer regions, using typical grid sizes. The~reference values of~$\Psi_1$ correspond to the magnetic axis ($\Psi = 0$), the~island O-point ($\Psi = -0.0384$), and~the separatrix ($\Psi = -0.0248$). The~reference volume $V_*$, used to compute the relative error, is obtained from Theorem~\ref{thm3'} with $N=400$, as~this proved the most accurate and efficient method. More comparable implementations would allow more meaningful comparisons to be~made.

In conclusion, our tests on Theorem \ref{thm4} sometimes did not give efficient volume computations, but~those on Theorem \ref{thm3'} generally did.  As~most integrable cases can also be treated by Theorem \ref{thm3'} after identification of a symmetry that conserves a density, our recommendation is to use Theorem \ref{thm3'} (unless Theorems \ref{thm2} or \ref{thm1} apply, in~which case the need to compute the harmonic average of the density is removed).

\begin{table}[h!] 
\footnotesize
%\small 
\begin{tabular}{ccc c cc c c  c  c c} %\textwidth
\specialrule{.12em}{0em}{0em}
 \boldmath{$\tilde{y}_1$} & \boldmath{$\Psi_1$} & \boldmath{$\tilde{y}_2$}   & \boldmath{$\Psi_2$} & \textbf{Region} 
  & \textbf{Method} & \boldmath{$N_1$} &  {$N_2$} &\boldmath{$V_0$}
    &  \boldmath{$\langle t_0\rangle$}\textbf{(s)} & \boldmath{$\Delta/V_{*}$}
    \\   \specialrule{.10em}{0em}{0em}
 % Core (1) ~ ~
 \multirow{4}{*}{$0$} & \multirow{4}{*}{0.0} &
 \multirow{4}{*}{$0.150$} & \multirow{4}{*}{$-$0.0060}
 & \multirow{4}{*}{Inner}   
    & Eq~(\ref{V_general}) & 300 & 300  & 0.997447 & 145 & 0.006 
       \\ \cline{6-11}
&&&&&  \cellcolor{grey3}Eq~(\ref{eq:vol_lambda}) & \cellcolor{grey3}100  
      & \cellcolor{grey3}100
      & \cellcolor{grey3}1.002811
      & \cellcolor{grey3}261
      & \cellcolor{grey3}$7.7\times 10^{-4}$
       \\ \cline{6-11}
&&&&& Thm \ref{thm3'} & 100  & ---
      &  1.013366 & 31   
      &  $9.7\times 10^{-4}$  
      \\ \cline{6-11}
&&&&& \cellcolor{grey3}Thm \ref{thm4} 
      & \cellcolor{grey3}100 
      & \cellcolor{grey3}---
      & \cellcolor{grey3}1.003619 & \cellcolor{grey3}115   
      & \cellcolor{grey3}$3.3\times 10^{-5}$  
    \\ \specialrule{.10em}{-.1em}{0em}
 % Core (2) ~ ~
 \multirow{4}{*}{$0$} & \multirow{4}{*}{0.0}  &
 \multirow{4}{*}{$0.320$} & \multirow{4}{*}{$-$0.0245}
 & \multirow{4}{*}{Inner}   
       & Eq~(\ref{V_general}) & 300 & 300  & 4.998763 & 652 & 0.005 
       \\ \cline{6-11}
 &&&&&  \cellcolor{grey3}Eq~(\ref{eq:vol_lambda}) 
      & \cellcolor{grey3}100  
      & \cellcolor{grey3}100 
      & \cellcolor{grey3}5.027210
      & \cellcolor{grey3}252
      & \cellcolor{grey3}$8.1\times 10^{-4}$  
       \\ \cline{6-11}
 &&&&& Thm \ref{thm3'} & 100  & ---
      & 5.031270 
      & 30
      & 0.002 
      \\ \cline{6-11}
 &&&&& \cellcolor{grey3}Thm \ref{thm4} 
      & \cellcolor{grey3}100  
      & \cellcolor{grey3}--- 
      & \cellcolor{grey3}5.031856 
      & \cellcolor{grey3}114   
      & \cellcolor{grey3}0.002  
    \\ \specialrule{.10em}{-.1em}{0em}
% Island (1) ~ ~
 \multirow{4}{*}{$0.52542$} & \multirow{4}{*}{$-$0.0384}  &
 \multirow{4}{*}{$0.550$} & \multirow{4}{*}{$-$0.0380}
 & \multirow{4}{*}{Island}   
       & Eq~(\ref{V_general}) & 500  & 500
       & 0.155675 
       & 17 
       & 0.003 
       \\ \cline{6-11}
&&&&&  \cellcolor{grey3}Eq~(\ref{eq:vol_lambda}) & \cellcolor{grey3}100  
      & \cellcolor{grey3}100
      & \cellcolor{grey3}0.154939
      & \cellcolor{grey3}310  
      & \cellcolor{grey3}0.002 
       \\ \cline{6-11}
&&&&& Thm \ref{thm3'} & 100  & ---
      &  0.155206 & 16   
      &  $5.1\times 10^{-5}$ 
      \\ \cline{6-11}
&&&&& \cellcolor{grey3}Thm \ref{thm4} & \cellcolor{grey3}100  
      &  \cellcolor{grey3}---
      & \cellcolor{grey3}0.155038
      & \cellcolor{grey3}58   
      & \cellcolor{grey3}0.001  
    \\  \specialrule{.10em}{-.1em}{0em}
% Island (2) ~ ~
 \multirow{4}{*}{$0.52542$} & \multirow{4}{*}{$-$0.0384}  &
 \multirow{4}{*}{$0.662$} & \multirow{4}{*}{$-$0.0251}  
 & \multirow{4}{*}{Island}   
       & Eq~(\ref{V_general}) & 500 &500 & 7.470030 & 640 & 0.005 
       \\ \cline{6-11}
&&&&&  \cellcolor{grey3}Eq~(\ref{eq:vol_lambda}) 
      & \cellcolor{grey3}100  
      & \cellcolor{grey3}150
      & \cellcolor{grey3}7.492240
      & \cellcolor{grey3}623  
      & \cellcolor{grey3}0.002 
       \\ \cline{6-11}
 &&&&& Thm \ref{thm3'} & 100  & ---
      &  7.521046
      & 18   
      & 0.002
      \\ \cline{6-11}
 &&&&& \cellcolor{grey3}Thm \ref{thm4} 
     & \cellcolor{grey3}100  & \cellcolor{grey3}---
 & \cellcolor{grey3}7.519672 
      &\cellcolor{grey3}106   
      & \cellcolor{grey3}0.002 
    \\ \specialrule{.10em}{-.1em}{0em}
% Outer (1) ~ ~
 \multirow{4}{*}{$0.66345$} & \multirow{4}{*}{$-$0.0248}  &
 \multirow{4}{*}{$0.670$} & \multirow{4}{*}{$-$0.0240}   
 & \multirow{4}{*}{Outer}   
       & Eq~(\ref{V_general}) & 600  &600 & 0.639184 & 58 & 0.014
       \\ \cline{6-11}
&&&&&  \cellcolor{grey3}Eq~(\ref{eq:vol_lambda}) 
      & \cellcolor{grey3}100  & \cellcolor{grey3}100 
      & \cellcolor{grey3}0.650239
      & \cellcolor{grey3}361
      & \cellcolor{grey3}0.003  
       \\ \cline{6-11}
 &&&&& Thm \ref{thm3'} & 100  & ---
      &  0.649799
      &   44
      &  0.002   
      \\ \cline{6-11}
 &&&&& \cellcolor{grey3}Thm \ref{thm4} 
      & \cellcolor{grey3}100 
      &  \cellcolor{grey3}---
      & \cellcolor{grey3}0.650996 
      & \cellcolor{grey3}212  
      & \cellcolor{grey3}0.004   
    \\  \specialrule{.10em}{-.1em}{0em}
 % Outer (2) ~ ~
 \multirow{4}{*}{$0.66345$} & \multirow{4}{*}{$-$0.0248}  &
 \multirow{4}{*}{$0.780$} & \multirow{4}{*}{0.0172}       
 & \multirow{4}{*}{Outer}   
       & Eq~(\ref{V_general}) & 600 &600 & 7.736527 & 614 & 0.008 
       \\ \cline{6-11}
&&&&&  \cellcolor{grey3}Eq~(\ref{eq:vol_lambda}) 
      & \cellcolor{grey3}100 & \cellcolor{grey3}100
      & \cellcolor{grey3}7.906824
      & \cellcolor{grey3}354
      & \cellcolor{grey3}0.014
       \\ \cline{6-11}
 &&&&& Thm \ref{thm3'} & 100  & ---
      & 7.904392 
      & 44   
      &  0.014  
      \\ \cline{6-11}
 &&&&& \cellcolor{grey3}Thm \ref{thm4} 
      & \cellcolor{grey3}100  &  \cellcolor{grey3}--- 
      & \cellcolor{grey3}7.909134
      & \cellcolor{grey3}173   
      & \cellcolor{grey3}0.014
    \\
\specialrule{.12em}{-.1em}{0em}
\end{tabular}
\caption{Comparison of the volume estimations $V_0$ enclosed by the flux surfaces $\Psi_1$ and $\Psi_2$, passing through $(\tilde{y}_1,0)$ and $(\tilde{y}_2,0)$, respectively. The~estimates are obtained from~(\ref{V_general}), {(\ref{eq:vol_lambda})} and Theorems~\ref{thm3'} and~\ref{thm4}, with~results shown against grid size, average runtime, and~relative error with respect to a reference volume $V_*$. 
The grid dimensions are $N_1 \times N_2$ for (\ref{V_general}), $N_g = N_2$ for the discretised level sets in (\ref{eq:vol_lambda}), and~$N_{\Psi} = N_1$ for integration in $\Psi$ in (\ref{eq:vol_lambda}) and Theorems~\ref{thm3'} and~\ref{thm4}. The~rectangular domains for~(\ref{V_general}) are inner region $[-0.335,\,0.335] \times [-0.47,\,0.47]$; island and outer regions $[-0.9,\,0.9] \times [-0.8,\,0.8]$.
The reference volume $V_*$ is given by Theorem~\ref{thm3'} with $N=400$. \label{Ex2_table}}
\end{table}

%\newpage
\section{Conclusions}
\label{sec:conc}

We computed the volumes enclosed by flux surfaces for two different integrable examples of magnetic fields using  formulae provided by~\cite{mackay2024volume} and a new formula presented here for integrable fields with a symmetry field that preserves a density. The~results of using different formulae are found to be consistent up to numerical error. As~the methods are equivalent to 2D integration, they are mostly faster to compute than the standard 3D~integration. 

The volume estimation for the axisymmetric field between codes based on (\ref{V_general}), (\ref{eq:vol_lambda}) and Theorem \ref{thm1} confirmed that the latter was notably more efficient and  accurate. This is evident in the comparison between a coarse grid in $\Psi$ for Theorem \ref{thm1} and a finer 2D grid for (\ref{V_general}) and (\ref{eq:vol_lambda}) included in the previous section.
Regarding the toroidal helical field, among~the three formulae tested, the~computational cost of Theorem~\ref{thm3'} was distinctly lower than that of (\ref{V_general}) and Theorem~\ref{thm4}, and~exhibited faster numerical convergence. The~accuracy of the integration in (\ref{V_general}) was constrained by the resolution of the regular grid employed for its evaluation. Nevertheless, for~flux surfaces with a small cross-section, (\ref{V_general}) is more efficient than Theorem~\ref{thm4}, since accurate integration along the flow lines was substantially more demanding, with~lengths greater by approximately an order of magnitude (i.e., by~a factor of ten).

A disadvantage of these methods is that they cannot be used to measure volumes enclosed by flux surfaces in fields that contain chaotic regions.  
It would be good to obtain an efficient formula for the flux enclosed by a flux surface of a general magnetic field.  We plan to achieve and test this via a vector potential on the flux surface, which easily gives efficient formulae for enclosed toroidal and poloidal flux, and~we have just worked out how to get it to give~volume.

It would be desirable to test the formulae on numerically computed quasisymmetric fields as in~\cite{LP22}.
The methods should also apply to approximately integrable fields with knotted or linked tori, as~in Ref.~\cite{HSF}, though~it is difficult to make exactly integrable examples (indeed, we have conjectured many years ago that non-degenerate integrable vacuum fields have to be axisymmetric).

\section*{Acknowledgements}
This work was supported by a grant from the Simons Foundation (601970, RSM).

\subsection*{Code availability}
Code for the computations is available on \cite{Vformulae2025b}.

%\appendix %\label{sec:app}
\section*{Appendix A. Computation of (\ref{V_general}) for Integrable Fields}
\label{sec:app}
\renewcommand{\theequation}{A\arabic{equation}}
\setcounter{equation}{0}

For an integrable field $B$, if~one wants to use (\ref{V_general}) to compute $V(\Psi)$ for a range of $\Psi$, one can evaluate the integrand at a large enough grid to cover the largest domain (say the largest value of $\Psi$ assuming $\Psi$ increases outwards), and~label each evaluation by its value of $\Psi$.  Then, for a given value of $\Psi$, it suffices to add up the results for smaller values of $\Psi$.  This can be done progressively as $\Psi$ increases.

It might, however, be more efficient (for given accuracy) to compute
\begin{equation}
\label{eq:vol_lambda}
\frac{dV}{d\Psi}(\Psi) = \int_{\partial D(\Psi)} \mathsf{T} \lambda \,,
\end{equation}
where $\lambda = i_Bi_n\Omega$ and $n = \nabla \Psi/|\nabla \Psi|^2$ (with respect to any Riemannian metric), and~then take its definite integral with respect to $\Psi$ from a reference case (e.g.,~magnetic axis).  To~prove  (\ref{eq:vol_lambda}), first note that $\beta = \lambda \wedge d\Psi$.  
This is because applying either side to $n$ gives $-\lambda$, and~applying either side to $B$ gives $0$.  Lastly, for~any $\xi$ independent from $n$ and $B$, evaluate both sides on $(n,\xi)$ and $(B,\xi)$ and obtain $-i_\xi \lambda$ and $0$, respectively;~evaluating on $(\xi,\xi)$ gives zero automatically on both sides. Then integrating $\mathsf{T} \lambda \wedge d\Psi$ over $\partial D(\Psi)$ gives the~result.

To compute the integral (\ref{eq:vol_lambda}) numerically, one could discretise $\partial D(\Psi)$ uniformly with respect to some choice $\nu$ of angle variable (mod 1) around it, $\nu_n = n/N$. Let $\epsilon_n$ be half the displacement vector from $\nu_{n-1}$ to $\nu_{n+1}$, evaluate $\lambda$ on $\epsilon_n$ considered to be based at $\nu_n$, and~compute their sum.  
Note that $i_\epsilon\lambda = \Omega(n,B,\epsilon)$.  
In vector calculus language, this is given by the triple product $i_{\epsilon}\lambda = n \times B \cdot \epsilon$, but~if the field is presented in non-trivial coordinates $x^i$, then it is simpler to use $i_{\epsilon} \lambda = \sqrt{|g|} \det W$, where $\sqrt{|g|}$ is the volume factor for the metric and $W$ is the matrix formed by the contravariant components of the three vectors in that coordinate system.  A~further simplification can be made by choosing the Riemannian metric in the definition of $n$ to be the standard one for the coordinate system, so $\nabla \Psi$ has components $\frac{\partial \Psi}{\partial x^i}$ (all that is required of $n$ is that $i_n d\Psi = 1$).
If everything is analytic, then by Fourier analysis, the error will be exponentially small in $N$.

Computing (\ref{eq:vol_lambda}) is a 2D integration ($\mathsf{T}$ is a 1D integral and it is integrated along $\partial D(\Psi)$), but~because one can get away with relatively coarse discretisation, we can consider it to be just a bit more than 1D.  So after integrating with respect to $\Psi$, the total is a bit more than a 2D~integration.

%\newpage
\bibliographystyle{unsrt}
\bibliography{Volumenbib}

\end{document}